\newtheorem{theo}{Theorem}[section]
\newtheorem{coll}[theo]{Corollary}
\newtheorem{lemm}[theo]{Lemma}
\newtheorem{prop}[theo]{Proposition}
\newtheorem{defn}[theo]{Definition}
\newtheorem{ex}[theo]{Example}
\newtheorem{rem}[theo]{Remark}
\newcommand{\Hom}{{\rm Hom}}
\renewcommand{\hom}[3]{\mathrm{Hom}_{#1}(#2,#3)}
\newcommand{\HOM}[3]{\mathrm{HOM}_{#1}(#2,#3)}
\begin{document}
\sloppy

\title[Strongly Rickart objects]{Strongly Rickart objects in abelian categories}

\author[S.Crivei]{Septimiu Crivei}

\address{Faculty of Mathematics and Computer Science, Babe\c s-Bolyai University, Str. M. Kog\u alniceanu 1,
400084 Cluj-Napoca, Romania} \email{crivei@math.ubbcluj.ro}

\author[G. Olteanu]{Gabriela Olteanu}

\address{Department of Statistics-Forecasts-Mathematics, Babe\c s-Bolyai University, Str. T. Mihali 58-60, 400591
Cluj-Napoca, Romania} \email{gabriela.olteanu@econ.ubbcluj.ro}

\subjclass[2000]{18E10, 18E15, 16D90, 16E50, 16T15, 16W50} \keywords{Abelian category, (dual) strongly Rickart object, 
strongly regular object, (dual) strongly Baer object, (graded) module, comodule.}

\begin{abstract} We introduce and study (dual) strongly relative Rickart objects in abelian categories. 
We prove general properties, we analyze the behaviour with respect to (co)products, and we study the transfer via functors.
We also give applications to Grothendieck categories, (graded) module categories and comodule categories.
Our theory of (dual) strongly relative Rickart objects may be employed in order to 
study strongly relative regular objects and (dual) strongly relative Baer objects in abelian categories.
\end{abstract}

\date{February 5, 2018}

\maketitle

\section{Introduction}

Among the most significant concepts in ring theory there was the following one: 
a ring $R$ is called \emph{von Neumann regular} if for every $a\in R$ there exists $b\in R$ such that $a=aba$ \cite{vN}.
The development of module theory has led to its module-theoretic generalization due to Zelmanowitz: 
a right $R$-module $M$ is called \emph{(Zelmanowitz) regular} if for every $f\in \Hom_R(R,M)\cong M$ there exists 
$g\in \Hom_R(M,R)$ such that $f=fgf$ \cite{Zel}. A natural further step was its categorical version, 
introduced by D\u asc\u alescu, N\u ast\u asescu, Tudorache and D\u au\c s as follows: 
given two objects $M$ and $N$ of an arbitrary category $\mathcal{C}$, $N$ is called \emph{$M$-regular} 
if for every $f\in \Hom_{\mathcal{C}}(M,N)$, there exists a morphism $g\in \Hom_{\mathcal{C}}(N,M)$ such that 
$f=fgf$. Relative regular objects were studied in the series of papers \cite{CK,CO,DNT,DNTD,Daus}. 
Regular morphisms and relative regular modules were also studied by Kasch and Mader \cite{KM}, 
and Lee, Rizvi and Roman \cite{LRR13}.

Given two objects $M$ and $N$ in an abelian category $\mathcal{A}$, 
$N$ is $M$-regular if and only if for every morphism $f:M\to N$ in $\mathcal{A}$, 
its kernel ${\rm Ker}(f)$ is a direct summand of $M$ and its image ${\rm Im}(f)$ is a direct summand of $N$ 
\cite[Proposition~3.1]{DNTD}. These two conditions characterizing relative regularity in abelian categories 
can be considered separately. This was done by Crivei and K\"or \cite{CK}, and Crivei and Olteanu \cite{CO}, 
under the names of \emph{relative Rickart} and \emph{dual relative Rickart} objects. In module categories, 
Rickart and dual Rickart objects were mainly studied by Lee, Rizvi and Roman \cite{LRR10,LRR11,LRR12}, 
as generalizations of Baer and dual Baer modules, studied by Rizvi and Roman \cite{RR04,RR09}, 
Keskin T\"ut\"unc\"u, Smith and Toksoy \cite{KST}, and Keskin T\"ut\"unc\"u and Tribak \cite{KT}. 
Unlike the previous papers, the framework of abelian categories allows one to study just one of the two concepts, 
and to obtain the corresponding results for the other one simply by using the duality principle. 
Relative Rickart and dual relative Rickart objects in abelian categories have been shown to be the suitable tools 
for a unified treatment of relative regular objects, Baer objects and dual Baer objects in abelian categories. 

The purpose of the present paper is to develop a similar theory starting from an important subclass of 
von Neumann regular rings, namely strongly regular rings. A ring $R$ is called \emph{strongly regular} if 
for every $a\in R$ there exists $b\in R$ such that $a=a^2b$ \cite{AK}. We introduce the following concepts.
Given two objects $M$ and $N$ of an abelian category $\mathcal{A}$, $N$ will be called:  
(1) \emph{(dual) strongly $M$-Rickart} if the kernel (the image) of every morphism $f:M\to N$ 
is a fully invariant direct summand of $M$ (of $N$);
(2) \emph{(dual) strongly self-Rickart} if $N$ is (dual) strongly $N$-Rickart; 
(3) \emph{strongly $M$-regular} if $N$ is strongly $M$-Rickart and dual strongly $M$-Rickart;
(4) \emph{strongly self-regular} if $N$ is strongly $N$-regular;
(5) \emph{(dual) strongly $M$-Baer} if for every family $(f_i)_{i\in I}$ with each $f_i\in \Hom_{\mathcal{A}}(M,N)$, $\bigcap_{i\in I}
{\rm Ker}(f_i)$ ($\sum_{i\in I} {\rm Im}(f_i)$) is a fully invariant direct summand of $M$ (of $N$); 
equivalently, when the required (co)product does exist, 
$N^I$ is strongly $M$-Rickart ($N$ is dual strongly $M^{(I)}$-Rickart) for every set $I$;
(6) \emph{(dual) strongly self-Baer} if $N$ is (dual) strongly $N$-Baer.
Note that strongly Rickart and dual strongly Rickart modules have been recently studied in \cite{AI14,AI15,Atani,WangY}.

Now it should be clear that properties on strongly relative Rickart objects in abelian categories can be used 
to obtain corresponding properties on all the other related concepts, namely 
dual strongly relative Rickart objects (by the duality principle), 
strongly relative regular objects (which are strongly relative Rickart and dual strongly relative Rickart),
strongly relative Baer objects (as particular strongly relative Rickart objects) and 
dual strongly relative Baer objects (by the duality principle).

We shall show in the companion paper \cite{CO2} how our theory of (dual) strongly relative Rickart objects 
may be effectively employed in order to study strongly relative regular objects and 
(dual) strongly relative Baer objects in abelian categories. 

Coming back to the contents of the present paper, we should mention that 
the statements of our results usually have two parts, out of which we only prove the first one, 
the second one following by the duality principle in abelian categories.

In Section 2 we introduce strongly relative Rickart objects in abelian categories. We show that 
an object $M$ of an abelian category $\mathcal{A}$ is strongly self-Rickart if and only if 
$M$ is self-Rickart and weak duo if and only if $M$ is self-Rickart and ${\rm End}_{\mathcal{A}}(M)$ is abelian.
If $r:M\to M'$ is an epimorphism and $s:N'\to N$ is a monomorphism in an abelian category, then we prove that 
if $N$ is strongly $M$-Rickart, then $N'$ is strongly $M'$-Rickart. 
For a short exact sequence $0\to N_1\to N\to N_2\to 0$ and an object $M$ of an abelian category
such that $N_1$ and $N_2$ are strongly $M$-Rickart, we show that $N$ is strongly $M$-Rickart.  

In Section 3 we study (co)products of strongly relative Rickart objects. We prove that if 
$M$, $N_1,\dots,N_n$ are objects of an abelian category $\mathcal{A}$, 
then $\bigoplus_{i=1}^n N_i$ is strongly $M$-Rickart if and only if $N_i$ is strongly $M$-Rickart for every $i\in \{1,\dots,n\}$.
If $M$ is a weak duo object with the strong summand intersection property of an abelian category $\mathcal{A}$, 
and $(N_i)_{i\in I}$ is a family of objects of $\mathcal{A}$ having a product, then we show that 
$\prod_{i\in I} N_i$ is strongly $M$-Rickart if and only if $N_i$ is strongly $M$-Rickart for every $i\in I$.
We also show that if $M=\bigoplus_{i\in I}M_i$ is a direct sum decomposition of 
an object $M$ of an abelian category $\mathcal{A}$, then $M$ is strongly self-Rickart if and only if 
$M_i$ is strongly self-Rickart for every $i\in I$, and ${\rm Hom}_{\mathcal{A}}(M_i,M_j)=0$ for every $i,j\in I$ with $i\neq j$.
We deduce a corollary on the structure of strongly self-Rickart modules over a Dedekind domain. 

In Section 4 we discuss the transfer of the strong relative Rickart property via functors. 
We show various results involving fully faithful functors, adjoint triples and adjoint pairs of functors. 
Thus, if $F:\mathcal{A}\to \mathcal{B}$ is a left exact fully faithful covariant functor between abelian
categories, and $M$ and $N$ are objects of $\mathcal{A}$, then $N$ is strongly $M$-Rickart in $\mathcal{A}$ 
if and only if $F(N)$ is strongly $F(M)$-Rickart in $\mathcal{B}$. We give applications to adjoint triple of functors 
as well as to reflective and coreflective abelian subcategories of an abelian category. Next we consider 
an adjoint pair $(L,R)$ of covariant functors $L:\mathcal{A}\to \mathcal{B}$ and
$R:\mathcal{B}\to \mathcal{A}$ between abelian categories with counit of adjunction $\varepsilon:LR\to 1_{\mathcal{B}}$. 
If $L$ is exact, and $M$, $N$ are objects of $\mathcal{B}$ such that $M,N\in {\rm Stat}(R)$ (i.e., $\varepsilon_M$ and 
$\varepsilon_N$ are isomorphisms), then we prove that the following are equivalent: 
$(i)$ $N$ is strongly $M$-regular in $\mathcal{B}$; 
$(ii)$ $R(N)$ is strongly $R(M)$-regular in $\mathcal{A}$ and for every morphism $f:M\to N$, ${\rm Ker}(f)$ is $M$-cyclic; 
$(iii)$ $R(N)$ is strongly $R(M)$-regular in $\mathcal{A}$ and for every morphism $f:M\to N$, ${\rm Ker}(f)\in {\rm Stat}(R)$.
Finally, we derive a theorem relating the strong self-Rickart properties of a module and of its
endomorphism ring. More precisely, the following are shown to be equivalent for a right $R$-module $M$ with $S={\rm End}_R(M)$: 
$(i)$ $M$ is a strongly self-Rickart right $R$-module;  
$(ii)$ $S$ is a strongly self-Rickart right $S$-module and for every $f\in S$, ${\rm Ker}(f)$ is $M$-cyclic;
$(iii)$ $S$ is a strongly self-Rickart right $S$-module and for every $f\in S$, ${\rm Ker}(f)\in {\rm Stat}({\rm Hom}_R(M,-))$;
$(iv)$ $S$ is a strongly self-Rickart right $S$-module and for every $f\in S$, ${\rm ker}(f)$ is a locally split monomorphism;
$(v)$ $S$ is a strongly self-Rickart right $S$-module and $M$ is $k$-quasi-retractable.

Throughout the paper we illustrate our results in Grothendieck categories, (graded) module categories and comodule categories.

\section{(Dual) strongly relative Rickart objects}

Let $\mathcal{A}$ be an abelian category. For every morphism $f:M\to N$ in $\mathcal{A}$ we have the following 
commutative diagram involving its kernel, cokernel, image and coimage:
$$\SelectTips{cm}{}
\xymatrix{
{\rm Ker}(f) \ar[r]^-{{\rm ker}(f)} & M \ar[r]^f \ar[d]_{{\rm coim}(f)} & N \ar[r]^-{{\rm coker}(f)} & {\rm Coker}(f) \\
& {\rm Coim}(f) \ar[r]_-{\overline{f}} & {\rm Im}(f) \ar[u]_{{\rm im}(f)} &  
}$$
where the row is exact and $\overline{f}$ is an isomorphism. 

Recall that a morphism $f:M\to N$ is called a \emph{section} if there is a morphism
$f':N\to M$ such that $f'f=1_M$, and a \emph{retraction} if there is a morphism $f':N\to
M$ such that $ff'=1_N$.  

Let us also recall the following definition.

\begin{defn} \cite[Definition~2.2]{CK} \rm Let $M$ and $N$ be objects of an abelian category $\mathcal{A}$. Then $N$ is called:
\begin{enumerate}
\item \emph{$M$-Rickart} if the kernel of every morphism $f:M\to N$ is a section, 
or equivalently, the coimage of every morphism $f:M\to N$ is a retraction.
\item \emph{dual $M$-Rickart} if the cokernel of every morphism $f:M\to N$ is a retraction, 
or equivalently, the image of every morphism $f:M\to N$ is a section.
\item \emph{self-Rickart} if $N$ is $N$-Rickart.
\item \emph{dual self-Rickart} if $N$ is dual $N$-Rickart.
\end{enumerate}
\end{defn}

In order to define the main concepts of our paper, which is a specialization of the above definition, 
we need to introduce the following generalization of fully invariant submodule of a module.

\begin{defn} \rm Let $\mathcal{A}$ be an abelian category. 
\begin{enumerate}
\item A kernel (or monomorphism) $k:K\to M$ in $\mathcal{A}$ is called \emph{fully invariant} if for every morphism $h:M\to M$, 
$hk$ factors through $k$. A subobject $K$ of $M$ is called \emph{fully invariant} 
if the inclusion monomorphism $k:K\to M$ is fully invariant.
\item A cokernel (or epimorphism) $c:M\to C$ in $\mathcal{A}$ is called \emph{fully coinvariant} if for every morphism $h:M\to M$, 
$ch$ factors through $c$. A factor object $C$ of $M$ is called \emph{fully coinvariant} 
if the natural epimorphism $c:M\to C$ is fully coinvariant.
\end{enumerate}
\end{defn}

\begin{ex} \rm In a module category, an inclusion monomorphism $k:K\to M$ is fully invariant if and only if 
for every homomorphism $h:M\to M$, $hk=k\alpha$ for some homomorphism $\alpha:K\to K$ if and only if
for every homomorphism $h:M\to M$, $h(K)={\rm Im}(hk)\subseteq {\rm Im}(k)=K$ if and only if 
$K$ is a fully invariant submodule of $M$.
\end{ex}

The following lemma is immediate.

\begin{lemm} \label{l1:comp} Let $\mathcal{A}$ be an abelian category. 
\begin{enumerate}
\item The composition of two fully invariant kernels is a fully invariant kernel.
\item The composition of two fully coinvariant cokernels is a fully coinvariant cokernel.
\end{enumerate}
\end{lemm}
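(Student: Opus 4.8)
The plan is to prove only part (1); part (2) then follows at once by the duality principle, exactly as the authors announce for their main results. So let $k\colon K\to L$ and $l\colon L\to M$ be two fully invariant kernels in $\mathcal{A}$, and consider their composite $lk\colon K\to M$. I must check two things: that $lk$ is again a kernel, and that it is fully invariant.

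For the first point, I would invoke the standard structure of an abelian category: a composite of two monomorphisms is a monomorphism, and in an abelian category every monomorphism is the kernel of its own cokernel. Hence $lk$, being a composite of the monomorphisms $l$ and $k$, is a monomorphism and therefore a kernel. This is the only step that is not a pure diagram chase, and it is where I expect the (very mild) subtlety to lie: one must use that in $\mathcal{A}$ "being a kernel" is equivalent to "being a monomorphism", rather than trying to exhibit $lk$ as the kernel of some prescribed morphism.

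For full invariance, I would simply chase the two defining factorizations in the correct order. Fix an arbitrary morphism $h\colon M\to M$. Since $l$ is fully invariant and $h$ is an endomorphism of $M$, there is a morphism $\beta\colon L\to L$ with $hl=l\beta$. Since $k$ is fully invariant and $\beta$ is an endomorphism of $L$, there is a morphism $\alpha\colon K\to K$ with $\beta k=k\alpha$. Combining these, $h(lk)=(hl)k=(l\beta)k=l(\beta k)=l(k\alpha)=(lk)\alpha$, so $h(lk)$ factors through $lk$. As $h$ was arbitrary, $lk$ is fully invariant, which completes part (1).

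The argument is genuinely routine — this is why the statement is labelled as immediate — and there is no real obstacle beyond being careful about the direction in which the two factorizations are applied: full invariance of the outer kernel $l$ is used first, and that of the inner kernel $k$ second. Part (2) is obtained by dualizing: the composite of two fully coinvariant cokernels is a cokernel by conormality of $\mathcal{A}$, and the analogous chase, applying full coinvariance of the inner cokernel first and of the outer cokernel second, yields full coinvariance of the composite.
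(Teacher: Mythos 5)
Your proof is correct, and since the paper states this lemma without proof (``The following lemma is immediate''), your diagram chase --- factoring $hl$ through $l$ first, then factoring the resulting endomorphism of $L$ through $k$ --- together with the observation that a composite of monomorphisms is a monomorphism and hence a kernel in an abelian category, is exactly the intended argument, with part (2) following by duality as you say.
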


\begin{lemm} \label{l1:eq} Let $\mathcal{A}$ be an abelian category. Then a kernel $k:K\to M$ in $\mathcal{A}$ is fully invariant 
if and only if its cokernel $c:M\to C$ is fully coinvariant. 
\end{lemm}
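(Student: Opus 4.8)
The plan is to exploit the single relation $ck=0$ together with the universal properties of the kernel and the cokernel, treating the two implications as mirror images of one another. Since $k:K\to M$ is a kernel in the abelian category $\mathcal{A}$, it is the kernel of its own cokernel $c:M\to C$; in particular we have at our disposal the short exact sequence $0\to K\xrightarrow{k}M\xrightarrow{c}C\to 0$, from which $ck=0$, $c={\rm coker}(k)$ and $k={\rm ker}(c)$. Each direction will then reduce to a one-line composition computation followed by an appeal to one of these universal properties. I do not expect a genuine obstacle here; the only point requiring a moment's care is to confirm that the morphisms produced by the universal properties automatically land among the endomorphisms of $K$ and of $C$, respectively, which is forced purely by their (co)domains.

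For the forward implication, I would assume $k$ is fully invariant and fix an arbitrary $h:M\to M$. By hypothesis $hk=k\beta$ for some $\beta:K\to K$, so that $c(hk)=(ck)\beta=0$, i.e.\ the morphism $ch:M\to C$ annihilates $k$. Since $c={\rm coker}(k)$, every morphism $M\to C$ that kills $k$ factors uniquely through $c$; applying this to $ch$ yields a (unique) $\gamma:C\to C$ with $ch=\gamma c$. Thus $ch$ factors through $c$, and as $h$ was arbitrary this shows $c$ is fully coinvariant.

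The reverse implication is strictly dual. Assuming $c$ fully coinvariant and fixing $h:M\to M$, by hypothesis $ch=\gamma c$ for some $\gamma:C\to C$, whence $c(hk)=\gamma(ck)=0$, so $hk:K\to M$ is annihilated by $c$. Since $k={\rm ker}(c)$, every morphism $K\to M$ killed by $c$ factors uniquely through $k$; applied to $hk$ this produces a (unique) $\beta:K\to K$ with $hk=k\beta$. Hence $hk$ factors through $k$ for every $h$, so $k$ is fully invariant. The entire argument rests on $ck=0$ and the fact that $k$ is a genuine kernel with $c$ its cokernel in an abelian category; this is precisely the feature that lets the statement serve as the bridge underlying the duality principle, converting the fully invariant (Rickart) side of the theory into its fully coinvariant (dual Rickart) counterpart.
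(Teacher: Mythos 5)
Your proof is correct and follows essentially the same route as the paper: the paper proves the implication ``$c$ fully coinvariant $\Rightarrow$ $k$ fully invariant'' exactly as in your reverse direction (compute $chk=\gamma ck=0$ and invoke $k={\rm ker}(c)$), then dismisses the other implication by the duality principle, whereas you write out that dual argument explicitly using $c={\rm coker}(k)$. Both rest on the same facts you isolate, namely $ck=0$ and that in an abelian category a kernel is the kernel of its own cokernel.
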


\begin{proof} We only need to show one implication, the other one following by duality. 
Assume that the cokernel $c:M\to C$ is fully coinvariant. Let $h:M\to M$ be a morphism. 
Then there exists a morphism $\gamma:C\to C$ such that $ch=\gamma c$. 
Since $chk=\gamma ck=0$, there exists a morphism $\alpha:K\to K$ such that $hk=k\alpha$. 
Hence the kernel $k:K\to M$ is fully invariant.
\end{proof}

We also need the following definition, which is a generalization of the corresponding one for modules. 

\begin{defn} \rm An object $M$ of an abelian category $\mathcal{A}$ is called \emph{weak duo} 
if every section $K\to M$ is fully invariant, or equivalently, every retraction $M\to C$ is fully coinvariant.
\end{defn}

Now we introduce the main concepts of the paper.

\begin{defn} \rm Let $M$ and $N$ be objects of an abelian category $\mathcal{A}$. Then $N$ is called:
\begin{enumerate}
\item \emph{strongly $M$-Rickart} if the kernel of every morphism $f:M\to N$ is a fully invariant section, 
or equivalently, the coimage of every morphism $f:M\to N$ is a fully coinvariant retraction.
\item \emph{dual strongly $M$-Rickart} if the cokernel of every morphism $f:M\to N$ is a fully coinvariant retraction, 
or equivalently, the image of every morphism $f:M\to N$ is a fully invariant section.
\item \emph{strongly self-Rickart} if $N$ is strongly $N$-Rickart.
\item \emph{dual strongly self-Rickart} if $N$ is dual strongly $N$-Rickart.
\end{enumerate}
\end{defn}

The following lemma is immediate.

\begin{lemm} \label{l1:split} Let $M$ and $N$ be objects of an abelian category $\mathcal{A}$.
\begin{enumerate}
\item If $N$ is strongly $M$-Rickart, then every cokernel $f:M\to N$ is a fully coinvariant retraction.
\item If $N$ is dual strongly $M$-Rickart, then every kernel $f:M\to N$ is a fully invariant section.
\end{enumerate}
\end{lemm}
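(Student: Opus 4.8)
The plan is to prove part (1) directly and deduce part (2) by the duality principle, following the convention announced in the introduction. So I would start by letting $f:M\to N$ be a cokernel, hence an epimorphism, and recalling the canonical factorization read off from the diagram at the start of the section: $f={\rm im}(f)\,\overline{f}\,{\rm coim}(f)$, in which $\overline{f}$ is always an isomorphism.

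The first step is to exploit that $f$ is an epimorphism. Then ${\rm Coker}(f)=0$, so ${\rm im}(f):{\rm Im}(f)\to N$ is an isomorphism as well; hence $f=\varphi\,{\rm coim}(f)$, where $\varphi:={\rm im}(f)\,\overline{f}:{\rm Coim}(f)\to N$ is an isomorphism. On the other hand, since $N$ is strongly $M$-Rickart, the equivalent formulation in the definition tells us that the coimage ${\rm coim}(f):M\to{\rm Coim}(f)$ is a fully coinvariant retraction. Thus $f$ arises from a fully coinvariant retraction by post-composition with an isomorphism, and it remains only to see that this operation preserves the property.

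The core of the argument is therefore to check stability of ``fully coinvariant retraction'' under post-composition with an isomorphism $\varphi$. For the retraction part, if $g:{\rm Coim}(f)\to M$ satisfies ${\rm coim}(f)\,g=1$, then $h:=g\,\varphi^{-1}$ gives $fh=\varphi\,{\rm coim}(f)\,g\,\varphi^{-1}=1$, so $f$ is a retraction. For the fully coinvariant part, given any $\alpha:M\to M$, fully coinvariance of ${\rm coim}(f)$ supplies $\gamma:{\rm Coim}(f)\to{\rm Coim}(f)$ with ${\rm coim}(f)\,\alpha=\gamma\,{\rm coim}(f)$; conjugating by $\varphi$ yields $f\alpha=(\varphi\gamma\varphi^{-1})\,f$, which exhibits $f\alpha$ as factoring through $f$. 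Hence $f$ is a fully coinvariant retraction, which proves (1).

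I do not anticipate a genuine obstacle: the whole point is the elementary remark that for an epimorphism the coimage differs from the morphism itself only by an isomorphism of the target, so the coimage property transfers verbatim. The only place demanding a little care is confirming that full coinvariance, which is a property of an epimorphism rather than of a plain morphism, is indeed stable under composition with isomorphisms, and this is exactly what the conjugation computation above establishes.
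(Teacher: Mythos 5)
Your proof is correct. Note that the paper offers no argument at all for this lemma (it is introduced with ``The following lemma is immediate''), so there is no official proof to match; your job was to supply the missing details, and you did so soundly: for an epimorphism $f$ the cokernel vanishes, so ${\rm im}(f)$ and hence $\varphi={\rm im}(f)\,\overline{f}$ are isomorphisms, $f=\varphi\,{\rm coim}(f)$, the hypothesis makes ${\rm coim}(f)$ a fully coinvariant retraction, and your conjugation computation correctly shows that both properties survive post-composition with $\varphi$. For comparison, there is a slightly slicker route that uses machinery the paper has already set up, and which is probably what the author means by ``immediate'': in an abelian category a cokernel $f:M\to N$ is the cokernel of its own kernel $k={\rm ker}(f):K\to M$; the hypothesis says $k$ is a fully invariant section; Lemma~\ref{l1:eq} then says precisely that its cokernel $f$ is fully coinvariant, and since $k$ is a section the short exact sequence $0\to K\to M\to N\to 0$ splits, so $f$ is a retraction. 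Your approach verifies the invariance transfer by hand through the canonical factorization, which is self-contained but reproves in this special case what Lemma~\ref{l1:eq} already gives; the alternative buys brevity by reusing that lemma, at the cost of invoking the fact that every cokernel is the cokernel of its kernel. Both arguments are equally valid, and part (2) follows by duality either way.
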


\begin{prop} \label{p1:wduo} Let $M$ and $N$ be objects of an abelian category $\mathcal{A}$. 
\begin{enumerate}
\item Assume that every direct summand of $M$ is isomorphic to a subobject of $N$. 
Then $N$ is strongly $M$-Rickart if and only if $N$ is $M$-Rickart and $M$ is weak duo.
\item Assume that every direct summand of $N$ is isomorphic to a factor object of $M$. 
Then $N$ is dual strongly $M$-Rickart if and only if $N$ is dual $M$-Rickart and $N$ is weak duo.
\end{enumerate}
\end{prop}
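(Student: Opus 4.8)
The plan is to prove only part (1); part (2) then follows by the duality principle in abelian categories, as the authors indicate throughout. It is convenient to record first that the sections into $M$ correspond precisely to the direct summands of $M$ regarded as subobjects, and that a section $s\colon K\to M$ is fully invariant if and only if its image is a fully invariant subobject of $M$ (full invariance only depends on this image, since $s$ factors as the inclusion of its image precomposed with an isomorphism). Consequently, \emph{$M$ is weak duo} is equivalent to the statement that \emph{every direct summand of $M$ is a fully invariant subobject}.

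First I would dispatch the easy implication. Assume $N$ is $M$-Rickart and $M$ is weak duo. Given any morphism $f\colon M\to N$, the hypothesis that $N$ is $M$-Rickart says that ${\rm ker}(f)$ is a section, and the hypothesis that $M$ is weak duo then says that this section is fully invariant. Hence ${\rm ker}(f)$ is a fully invariant section, so $N$ is strongly $M$-Rickart. This direction uses neither the assumption on the direct summands of $M$.

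For the converse, assume $N$ is strongly $M$-Rickart. That $N$ is $M$-Rickart is immediate, since a fully invariant section is in particular a section. The substance lies in showing that $M$ is weak duo, and this is where the hypothesis that every direct summand of $M$ is isomorphic to a subobject of $N$ enters. Let $s\colon K\to M$ be a section, whose image is a direct summand $D$, so that $M=D\oplus D'$ for some complement $D'$; write $q\colon M\to D'$ for the projection, whose kernel morphism ${\rm ker}(q)$ is, up to isomorphism, the inclusion $D\to M$. The key observation is that since $D'$ is itself a direct summand of $M$, the hypothesis furnishes a monomorphism $\nu\colon D'\to N$. Form $g=\nu q\colon M\to N$. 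Because $\nu$ is a monomorphism, ${\rm Ker}(g)={\rm Ker}(q)$, so ${\rm ker}(g)$ is again the inclusion $D\to M$. Since $N$ is strongly $M$-Rickart, ${\rm ker}(g)$ is a fully invariant section, which means $D$ is a fully invariant subobject of $M$; as $s$ has image $D$, the section $s$ is fully invariant. Thus $M$ is weak duo.

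The main obstacle is the correct choice in the converse: one must realize the given direct summand $D$ as the kernel of a morphism $M\to N$, and the natural way to do so is to project onto the complement $D'$ and then embed $D'$ into $N$, rather than to embed $D$ itself. It is precisely the complement $D'$ that the direct-summand hypothesis must be applied to, and the monomorphicity of $\nu$ is what guarantees ${\rm Ker}(\nu q)={\rm Ker}(q)$. Once this composite is in hand, the strong Rickart property delivers full invariance of $D$ directly.
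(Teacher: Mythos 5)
Your proof is correct and follows essentially the same route as the paper: given a section with image $D$ and complement $D'$, the paper likewise uses the hypothesis to embed the complement $D'$ into $N$, composes with the projection $M\to D'$, and identifies the kernel of this composite with the inclusion of $D$ to conclude full invariance from the strong Rickart property. Your added remark that full invariance of a section depends only on its image is a harmless (and correct) bit of extra care that the paper leaves implicit.
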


\begin{proof} (1) Suppose first that $N$ is strongly $M$-Rickart. Clearly, $N$ is $M$-Rickart. 
Let $k:K\to M$ be a section. Write $M=K\oplus J$ for some subobject $J$ of $M$. 
There is an isomorphism $\varphi:J\to L$ for some subobject $L$ of $N$. Denote by 
$l:L\to N$ the inclusion monomorphism and by $p:M\to J$ the canonical projection. Consider the morphism 
$l\varphi p:M\to N$. Since $N$ is strongly $M$-Rickart, 
$k={\rm ker}(p)={\rm ker}(l\varphi p)$ is a fully invariant section. Hence $M$ is weak duo.

The converse is clear. 
\end{proof}

\begin{coll} \label{c1:wduo} Let $M$ be an object of an abelian category $\mathcal{A}$. Then:
\begin{enumerate} 
\item $M$ is strongly self-Rickart if and only if $M$ is self-Rickart and weak duo. 
\item $M$ is dual strongly self-Rickart if and only if $M$ is dual self-Rickart and weak duo.
\end{enumerate}
\end{coll}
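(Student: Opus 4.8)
The plan is to obtain both statements as the special case $N=M$ of Proposition~\ref{p1:wduo}, so that the whole task reduces to checking that the hypotheses of that proposition become automatic once the two objects coincide. Following the convention announced in the introduction, I would prove part~(1) directly and deduce part~(2) by the duality principle.

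For part~(1), I would apply Proposition~\ref{p1:wduo}(1) with $N=M$. Its hypothesis requires that every direct summand of $M$ be isomorphic to a subobject of $N$; since here $N=M$, this amounts to asking that every direct summand of $M$ be isomorphic to a subobject of $M$, which holds trivially because a direct summand of $M$ is in particular a subobject of $M$ (and is isomorphic to itself). With the hypothesis in force, the proposition yields that $M$ is strongly $M$-Rickart if and only if $M$ is $M$-Rickart and $M$ is weak duo. Unfolding the definitions, being strongly $M$-Rickart is precisely being strongly self-Rickart, and being $M$-Rickart is precisely being self-Rickart, which gives the claimed equivalence.

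Part~(2) then follows by the duality principle in abelian categories; equivalently, one applies Proposition~\ref{p1:wduo}(2) with $N=M$. Its hypothesis, that every direct summand of $N$ be isomorphic to a factor object of $M$, again holds trivially when $N=M$: for a decomposition $M=K\oplus J$ the summand $K$ is isomorphic to the factor object which is the cokernel of the inclusion $J\to M$. Thus $M$ is dual strongly self-Rickart if and only if $M$ is dual self-Rickart and weak duo.

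Since both statements are pure specializations, I expect essentially no obstacle. The only point requiring a moment's care is to confirm that the two asymmetric-looking hypotheses of Proposition~\ref{p1:wduo} — the ``subobject'' version in part~(1) and the ``factor object'' version in part~(2) — both collapse to tautologies once $M=N$, which they do; note also that the weak duo condition is imposed on $M$ in both versions, so no incompatibility arises upon setting $N=M$.
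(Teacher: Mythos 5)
Your proposal is correct and matches the paper's (implicit) argument exactly: Corollary~\ref{c1:wduo} is stated without proof precisely because it is the specialization $N=M$ of Proposition~\ref{p1:wduo}, whose hypotheses become tautological in that case, just as you verify. Your only imprecision is cosmetic --- in Proposition~\ref{p1:wduo}(2) the weak duo condition is imposed on $N$ rather than on $M$ --- but since $N=M$ this changes nothing.
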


\begin{coll} \label{c1:indec} Let $M$ be an indecomposable object of an abelian category $\mathcal{A}$. Then:
\begin{enumerate} 
\item $M$ is strongly self-Rickart if and only if $M$ is self-Rickart. 
\item $M$ is dual strongly self-Rickart if and only if $M$ is dual self-Rickart.
\end{enumerate}
\end{coll}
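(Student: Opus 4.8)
The plan is to reduce the statement to the weak duo property by invoking Corollary~\ref{c1:wduo}. For part (1), one implication is immediate: any strongly self-Rickart object is self-Rickart, since a fully invariant section is in particular a section. For the converse, Corollary~\ref{c1:wduo}(1) shows that a self-Rickart object is strongly self-Rickart exactly when it is also weak duo. Hence the entire content of part (1) collapses to a single claim: every indecomposable object is weak duo. This is where I would focus the argument.

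To establish that $M$ is weak duo, I would examine the sections $k:K\to M$. Each such section determines a direct summand $\mathrm{Im}(k)\cong K$ of $M$ (as in the proof of Proposition~\ref{p1:wduo}, writing $M=\mathrm{Im}(k)\oplus J$), so indecomposability forces one of two trivial cases: either $K=0$, in which case $k$ is the zero morphism $0\to M$, or $\mathrm{Im}(k)=M$, in which case $k$ is an epimorphism and, being already a monomorphism, an isomorphism. It then remains only to verify that each of these two types of section is fully invariant. For the zero morphism and any $h:M\to M$, the composite $hk=0$ factors through $k$ trivially; if instead $k$ is an isomorphism, then $hk=k(k^{-1}hk)$ factors through $k$ via $k^{-1}hk:K\to K$. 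Thus every section into $M$ is fully invariant, so $M$ is weak duo, and part (1) follows.

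Part (2) follows by the duality principle, exactly as advertised in the introduction. I do not expect any serious obstacle here: the only substantive observation is that indecomposability collapses the sections into $M$ to the two trivial cases (the zero subobject and $M$ itself), after which the fully invariant check is routine because trivial subobjects are automatically fully invariant. The real leverage is the reduction through Corollary~\ref{c1:wduo}, which is what makes the whole argument short.
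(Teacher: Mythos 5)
Your proof is correct and follows exactly the route the paper intends: the corollary is stated without proof precisely because it reduces, via Corollary~\ref{c1:wduo}, to the observation that an indecomposable object is weak duo (its only sections are the zero morphism and isomorphisms, both trivially fully invariant), with part (2) by duality. Your verification of the two trivial cases is accurate, so there is nothing to add.
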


\begin{ex} \rm By Corollary \ref{c1:indec} and \cite[Example~2.5]{LRR11}, 
the indecomposable abelian group $\mathbb{Z}$ is strongly self-Rickart, 
but not dual strongly self-Rickart, while the indecomposable abelian group  
$\mathbb{Z}_{p^{\infty}}$ (the Pr\"ufer group for some prime $p$) is 
dual strongly self-Rickart, but not strongly self-Rickart. 
\end{ex}

Now let us recall some ring-theoretic concepts. A ring $R$ is called 
\emph{abelian} if every idempotent element of $R$ is central. An element $a$ of a ring $R$ is called 
\emph{left (right) semicentral} if $ba=aba$ ($ab=aba$) for every $b\in R$. 
Note that a ring $R$ is abelian if and only if every idempotent element of $R$ is left semicentral 
if and only if every idempotent element of $R$ is right semicentral (e.g., see \cite[p.~17]{Wei}). 
A short argument of the non-trivial part of these equivalences is as follows: 
if every idempotent element of $R$ is left semicentral,
then for every idempotent $e\in R$ and for every element $b\in R$, $e$ and $1-e$ are left semicentral, 
which implies that $be=ebe=(1-e)b(1-e)-b(1-e)+eb=b(1-e)-b(1-e)+eb=eb$, hence $e$ is central.

The following lemma generalizes \cite[Lemma~1.9~(ii)]{BMR}. 
It uses the property that every abelian category $\mathcal{A}$ has split idempotents, that is, 
for every idempotent $e:M\to M$, there exist an object $K$  
and morphisms $k:K\to M$ and $p:M\to K$ such that $kp=e$ and $pk=1_K$.

\begin{lemm} \label{l1:semic} Let $M$ be an object of an abelian category $\mathcal{A}$, and $e=e^2\in {\rm End}_{\mathcal{A}}(M)$. 
Consider an object $K$ of $\mathcal{A}$ and morphisms $k:K\to M$ and $p:M\to K$ such that 
$kp=e$ and $pk=1_K$. Then:
\begin{enumerate}
\item The kernel $k$ is fully invariant if and only if $e$ is left semicentral.  
\item The cokernel $p$ is fully coinvariant if and only if $e$ is right semicentral.
\end{enumerate}
\end{lemm}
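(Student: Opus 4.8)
The plan is to prove (1) in full and obtain (2) by the duality principle, since under the passage to $\mathcal{A}^{\mathrm{op}}$ a fully invariant kernel becomes a fully coinvariant cokernel, the endomorphism ring $\End_{\mathcal{A}}(M)$ is replaced by its opposite, left semicentral elements become right semicentral elements, and the defining relations $kp=e$, $pk=1_K$ are manifestly self-dual. So everything below concerns (1).

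Before addressing either implication, I would record the two identities that the hypotheses $kp=e$ and $pk=1_K$ force, namely $ek=k$ and $pe=p$, obtained immediately from $ek=kpk=k$ and $pe=pkp=p$ via $pk=1_K$. These let me pass freely between composites written in terms of $e$ and composites written in terms of $k$ and $p$, and they are where the idempotent structure is actually used.

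For the forward implication, I would assume $k$ is fully invariant and fix $h\in\End_{\mathcal{A}}(M)$. By definition there is a morphism $\alpha:K\to K$ with $hk=k\alpha$; composing on the left with $p$ and using $pk=1_K$ identifies $\alpha=phk$. Then $he=h(kp)=(hk)p=k\alpha p$, while $ehe=(kp)h(kp)=k(phk)p=k\alpha p$, so $he=ehe$ and $e$ is left semicentral. For the converse, I would assume $e$ is left semicentral, so $he=ehe$ for every $h$, fix $h$, and propose the candidate $\alpha=phk:K\to K$; it then suffices to check $hk=k\alpha=ehk$, i.e. $hk=ehk$. Multiplying the identity $he=ehe$ on the right by $k$ and invoking $ek=k$ collapses it to exactly $hk=ehk$, whence $hk=k(phk)=k\alpha$ factors through $k$ and $k$ is fully invariant.

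The computation is short and essentially routine once the two auxiliary identities $ek=k$ and $pk=1_K$ are in place; the only point requiring a little care is making sure these relations are applied in the right spots to collapse the four-fold composite $ehe$ and to verify that the proposed $\alpha=phk$ genuinely satisfies $hk=k\alpha$. I do not anticipate a substantive obstacle beyond this bookkeeping.
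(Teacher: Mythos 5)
Your proof is correct and takes essentially the same route as the paper's: a direct computation with the splitting $kp=e$, $pk=1_K$, deriving $he=ehe$ from the factorization of $hk$ through $k$ (the paper factors $hek$ instead, and in the converse verifies $hk=kphk$ just as you do via $hk=ehk$). One minor remark in your favor: your conclusion that $e$ is \emph{left} semicentral matches both the statement and the paper's definition ($ba=aba$), whereas the paper's proof text momentarily mislabels the identity $ehe=he$ as ``right semicentral''.
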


\begin{proof} (1) Assume that $k$ is fully invariant. Let $h\in {\rm End}_{\mathcal{A}}(M)$. 
Then $hek=k\alpha$ for some morphism $\alpha:K\to K$. 
It follows that $ehek=ek\alpha=kpk\alpha=k\alpha=hek$, hence $ehe=ehekp=hekp=he$. Thus, $e$ is right semicentral. 

Conversely, assume that $e$ is right semicentral. Let $h\in {\rm End}_{\mathcal{A}}(M)$.
Then $hk=hkpk=hek=ehek=kphkpk=kphk$, and so $k$ is fully invariant.
\end{proof}

\begin{prop} \label{p1:strring} Let $M$ be an object of an abelian category $\mathcal{A}$. Then:
\begin{enumerate}
\item $M$ is strongly self-Rickart if and only if $M$ is self-Rickart and ${\rm End}_{\mathcal{A}}(M)$ is abelian.
\item $M$ is dual strongly self-Rickart if and only if $M$ is dual self-Rickart and ${\rm End}_{\mathcal{A}}(M)$ is abelian.
\end{enumerate}
\end{prop}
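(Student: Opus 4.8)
The plan is to prove part (1) only, since part (2) follows by the duality principle in abelian categories. The key reduction is to combine Corollary~\ref{c1:wduo}, which already states that $M$ is strongly self-Rickart if and only if $M$ is self-Rickart and weak duo, with a direct translation of the weak duo property into a condition on the idempotents of $\End_{\mathcal{A}}(M)$. Thus it suffices to establish that $M$ is weak duo if and only if $\End_{\mathcal{A}}(M)$ is abelian; I would emphasize that this equivalence does not actually use the self-Rickart hypothesis, so it can be proved in isolation and then spliced together with Corollary~\ref{c1:wduo}.

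To prove this equivalence I would exploit the split idempotents property of $\mathcal{A}$ in order to pass between sections $k:K\to M$ and idempotents of $\End_{\mathcal{A}}(M)$. On the one hand, every section $k:K\to M$ admits a retraction $p:M\to K$ with $pk=1_K$, and then $e=kp$ is an idempotent satisfying $kp=e$ and $pk=1_K$. On the other hand, every idempotent $e$ of $\End_{\mathcal{A}}(M)$ factors, by the split idempotents property, as $e=kp$ with $pk=1_K$, where $k$ is a section. In either situation Lemma~\ref{l1:semic}~(1) applies and yields that $k$ is fully invariant if and only if $e$ is left semicentral.

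Assembling these observations gives the desired chain of equivalences. If $M$ is weak duo, then given an arbitrary idempotent $e$, writing $e=kp$ with $pk=1_K$ exhibits $k$ as a section, which is fully invariant by hypothesis, so $e$ is left semicentral by Lemma~\ref{l1:semic}~(1); since every idempotent is then left semicentral, the characterization of abelian rings recalled before Lemma~\ref{l1:semic} shows that $\End_{\mathcal{A}}(M)$ is abelian. Conversely, if $\End_{\mathcal{A}}(M)$ is abelian and $k:K\to M$ is any section with retraction $p$, then $e=kp$ is a left semicentral idempotent, whence $k$ is fully invariant by Lemma~\ref{l1:semic}~(1); as $k$ was arbitrary, $M$ is weak duo. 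Feeding this equivalence into Corollary~\ref{c1:wduo} completes the proof.

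There is no serious obstacle here: the argument is a routine synthesis of Corollary~\ref{c1:wduo}, the split idempotents property, Lemma~\ref{l1:semic}~(1), and the ring-theoretic fact that a ring is abelian precisely when each of its idempotents is left semicentral. The only point demanding mild care is checking that the passage between sections and idempotents is compatible in both directions, so that the weak duo condition translates \emph{exactly} into the requirement that every idempotent of $\End_{\mathcal{A}}(M)$ be left semicentral; once this is in place, the two implications are immediate.
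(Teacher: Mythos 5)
Your proposal is correct and follows essentially the same route as the paper: both arguments pass between sections and split idempotents, invoke Lemma~\ref{l1:semic}~(1) together with the fact that a ring is abelian precisely when every idempotent is left semicentral, and conclude via Corollary~\ref{c1:wduo}. Your only deviation is organizational, isolating the equivalence ``$M$ weak duo $\Leftrightarrow$ ${\rm End}_{\mathcal{A}}(M)$ abelian'' as a standalone statement independent of the self-Rickart hypothesis, which is a harmless (and slightly cleaner) repackaging of the paper's proof.
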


\begin{proof} (1) Assume first that $M$ is strongly self-Rickart. Then $M$ is self-Rickart.
Let $e=e^2\in {\rm End}_{\mathcal{A}}(M)$. Since every idempotent splits, 
there exist an object $K$ and morphisms $k:K\to M$ and $p:M\to K$ such that $kp=e$ and $pk=1_K$. 
Since $M$ is strongly self-Rickart, the kernel $k$ is fully invariant. Then $e$ is left semicentral by Lemma~\ref{l1:semic}. 
Hence every idempotent in ${\rm End}_{\mathcal{A}}(M)$ is left semicentral, 
which shows that ${\rm End}_{\mathcal{A}}(M)$ is abelian.

Conversely, assume that $M$ is self-Rickart and ${\rm End}_{\mathcal{A}}(M)$ is abelian.
We claim that $M$ is weak duo. To this end, let $k:K\to M$ be a section and $p:M\to K$ the canonical projection. 
Then $pk=1_K$ and $e=kp\in {\rm End}_{\mathcal{A}}(M)$ is idempotent. Since ${\rm End}_{\mathcal{A}}(M)$ is abelian, 
$e$ is left semicentral, hence the kernel $k$ is fully invariant by Lemma~\ref{l1:semic}. Then $M$ is weak duo. 
Finally, $M$ is strongly self-Rickart by Corollary~\ref{c1:wduo}.  
\end{proof}

\begin{rem} \rm By Proposition \ref{p1:strring} it follows that a right $R$-module $M$ is strongly self-Rickart 
if and only if the kernel of every endomorphism of $M$ is generated by a right semicentral idempotent, 
while a right $R$-module $M$ is dual strongly self-Rickart if and only if the image of every endomorphism of $M$ 
is generated by a right semicentral idempotent.
\end{rem}

\begin{ex} \rm Let $K$ be a field. Then $R=\begin{pmatrix}K&K\\0&K\end{pmatrix}$ 
is a self-Rickart right $R$-module, which is not strongly self-Rickart \cite[Example~2.16]{Atani}. 
Also, the full $2\times 2$ matrix ring $S=M_2(K)$ over $K$ is a dual self-Rickart right $S$-module, 
which is not dual strongly self-Rickart \cite[Example~3.14]{WangY}. 
\end{ex}

The following result will be very useful. 

\begin{theo} \label{t1:epimono} Let $r:M\to M'$ be an epimorphism and $s:N'\to N$ a monomorphism in an abelian category
$\mathcal{A}$. 
\begin{enumerate} 
\item If $N$ is strongly $M$-Rickart, then $N'$ is strongly $M'$-Rickart.
\item If $N$ is dual strongly $M$-Rickart, then $N'$ is dual strongly $M'$-Rickart.
\end{enumerate}
\end{theo}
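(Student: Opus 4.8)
By the duality principle it is enough to treat part (1); part (2) then follows by passing to the opposite category, where $r$ becomes a monomorphism, $s$ becomes an epimorphism, and the strongly Rickart property becomes the dual strongly Rickart property.

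The plan for (1) is to transport the hypothesis along the comparison morphism built from $r$ and $s$. First I would take an arbitrary morphism $f':M'\to N'$ and set $f=sf'r:M\to N$. Since $s$ is a monomorphism, ${\rm Ker}(f)={\rm Ker}(f'r)$, and as $N$ is strongly $M$-Rickart this common kernel is a fully invariant section of $M$; write $k:K\to M$ for it, so that $M=K\oplus C$ with $K={\rm Ker}(f'r)$ fully invariant and $C$ a complement. The aim is to produce an analogous decomposition of $M'$ in which ${\rm Ker}(f')$ is the fully invariant summand.

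Next I would build the decomposition of $M'$. The observation that makes everything work is that ${\rm Ker}(r)\subseteq {\rm Ker}(f'r)=K$, since $f'r$ annihilates ${\rm Ker}(r)$. Consequently ${\rm Ker}(r)\cap C=0$, so $r$ restricts to a monomorphism on $C$ and identifies $C$ isomorphically, via some isomorphism $\sigma$, with its image $D\subseteq M'$; and because $f'rk=0$, the morphism $rk$ factors through ${\rm ker}(f')$, yielding $\bar r:K\to {\rm Ker}(f')$ with ${\rm ker}(f')\,\bar r=rk$. Applying the epimorphism $r$ to $M=K\oplus C$ gives $M'={\rm Ker}(f')+D$, and a short computation using $K\cap C=0$ shows that this sum is direct and that $\bar r$ is an epimorphism onto ${\rm Ker}(f')$. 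Thus $M'={\rm Ker}(f')\oplus D$, so ${\rm Ker}(f')$ is already a section of $M'$; it remains only to see that it is fully invariant.

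The full invariance is the step I expect to be the main obstacle, because one cannot in general lift an endomorphism $h':M'\to M'$ along the epimorphism $r$ to an endomorphism of $M$ (that would require a projectivity hypothesis), so a direct ``transport of endomorphisms'' argument is unavailable. To get around this I would first record a reformulation of full invariance for a split subobject: if $M=K\oplus C$, then the section $K\to M$ is fully invariant if and only if ${\rm Hom}_{\mathcal{A}}(K,C)=0$, because the component $\pi_C hk:K\to C$ of an endomorphism $h$ (with $\pi_C:M\to C$ the projection) ranges over all of ${\rm Hom}_{\mathcal{A}}(K,C)$ as $h$ varies. Full invariance of $K$ in $M$ therefore reads ${\rm Hom}_{\mathcal{A}}(K,C)=0$. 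Now for any $\psi:{\rm Ker}(f')\to D$ the composite $\sigma^{-1}\psi\bar r$ lies in ${\rm Hom}_{\mathcal{A}}(K,C)=0$; since $\sigma^{-1}$ is an isomorphism and $\bar r$ is an epimorphism, this forces $\psi=0$. Hence ${\rm Hom}_{\mathcal{A}}({\rm Ker}(f'),D)=0$, which by the same reformulation means that ${\rm Ker}(f')$ is fully invariant in $M'={\rm Ker}(f')\oplus D$. Therefore ${\rm Ker}(f')$ is a fully invariant section, and $N'$ is strongly $M'$-Rickart.
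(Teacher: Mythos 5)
Your proof is correct, and it takes a genuinely different route from the paper's. Both arguments open the same way: given $f':M'\to N'$, pass to $sf'r:M\to N$, note ${\rm ker}(sf'r)={\rm ker}(f'r)$, and obtain the fully invariant direct summand $K={\rm Ker}(f'r)$ of $M$, say $M=K\oplus C$. From there the paper proceeds diagram-theoretically: it forms the pushout square of \cite[Lemma~2.9]{CK} relating ${\rm ker}(f'r)$ and ${\rm ker}(f')$, concludes that ${\rm ker}(f')$ is a section because pushouts preserve sections, and then gets full invariance by showing that the split cokernel $u:M'\to L'$ is fully coinvariant (using a splitting $w$ of the top row together with the full invariance of ${\rm ker}(f'r)$), finally invoking Lemma~\ref{l1:eq}. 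You instead build the splitting by hand: $D=r(C)$ is a complement of ${\rm Ker}(f')$ in $M'$ because ${\rm Ker}(r)\subseteq K$ forces $r$ to be monic on $C$, and the induced map $\bar r:K\to {\rm Ker}(f')$ is epic; you then settle full invariance via the criterion that a direct summand with complement $C$ is fully invariant if and only if ${\rm Hom}_{\mathcal{A}}(K,C)=0$, transporting this vanishing along $\bar r$ and $\sigma$ (the implication $\sigma^{-1}\psi\bar r=0\Rightarrow\psi=0$ is exactly right). This criterion is correct -- it is essentially the principle at work in Theorem~\ref{t1:homzero} -- and it neatly sidesteps the obstacle you identify, namely that endomorphisms of $M'$ need not lift along $r$. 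What each approach buys: yours is self-contained, avoiding the pushout machinery imported from \cite{CK}, and reduces the full-invariance step to a one-line cancellation; the paper's stays entirely arrow-theoretic and reuses lemmas already in place in this series of papers, keeping the proof uniform with its surroundings. One expository caveat for your version: the element-style steps (directness of ${\rm Ker}(f')+D$, surjectivity of $\bar r$) need justification in an arbitrary abelian category, but they do rephrase arrow-theoretically -- the morphism $({\rm ker}(f'),d):{\rm Ker}(f')\oplus D\to M'$ is epic because $r$ factors through it, and monic because $f'd$ and ${\rm ker}(f')$ are monic while $f'\,{\rm ker}(f')=0$; composing $r$ with the projection of $M'$ onto ${\rm Ker}(f')$ shows $\bar r$ is epic -- so this is a matter of writing, not a gap.
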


\begin{proof} (1) We use the idea of the proof of \cite[Theorem~2.10]{CK}. 

Let $f:M'\to N'$ be a morphism in $\mathcal{A}$. Consider the morphism $sfr:M\to N$. Since $N$ is strongly 
$M$-Rickart, ${\rm ker}(sfr)$ is a fully invariant section. But $s$ is a monomorphism, 
hence ${\rm ker}(sfr)={\rm ker}(fr)$. We have the following induced commutative diagram:
$$\SelectTips{cm}{}
\xymatrix{
 & 0 \ar[d] & 0 \ar[d] & \\
 & X \ar@{=}[r] \ar[d]_i & X \ar[d]^{li} & \\ 
0 \ar[r] & L \ar[r]^-{l} \ar[d]_p & M \ar[r]^-{fr} \ar[d]^r & N' \ar@{=}[d] \ar[r]^s & N \ar@{=}[d] \\
0 \ar[r] & K \ar[r]_-{k} \ar[d] & M' \ar[r]_f \ar[d] & N' \ar[r]_s & N \\
 & 0 & 0 &
}$$
with left exact rows and exact columns, in which the square $LMKM'$ is a pushout by \cite[Lemma~2.9]{CK}. 
Since pushouts preserve sections, it follows that ${\rm ker}(f)=k$ is a section. 

We know that $l={\rm ker}(fr)$ is a fully invariant section. We claim that $k:K\to M'$ is also a fully invariant section. 
To this end, let $h:M'\to M'$ be a morphism. The pushout $LMKM'$ induces the following commutative diagram with split exact rows:
$$\SelectTips{cm}{}
\xymatrix{
0 \ar[r] & L \ar[r]^-{l} \ar[d]_p & M \ar[r]^-{v} \ar[d]^r & L' \ar@{=}[d] \ar[r] & 0 \\
0 \ar[r] & K \ar[r]_-{k} & M' \ar[r]_u & L' \ar[r] & 0 \\
}$$
There exists a monomorphism $w:L'\to M$ such that $vw=1_{L'}$. Consider the morphism $wuhrl:L\to M$. 
Since $l$ is a fully invariant section, there exists a morphism $\alpha:L\to L$ such that $wuhrl=l\alpha$. 
Then $uhkp=uhrl=vwuhrl=0$, whence we have $uhk=0$, because $p$ is an epimorphism. It follows that there exists a morphism 
$\gamma:L'\to L'$ such that $uh=\gamma u$. Hence $u$ is a fully coinvariant cokernel. Then $k$ is a fully invariant kernel by 
Lemma \ref{l1:eq}. This shows that $N'$ is strongly $M'$-Rickart.
\end{proof}

\begin{coll} \label{c1:summand} Let $M$ and $N$ be objects of an abelian category $\mathcal{A}$, $M'$ a direct summand 
of $M$ and $N'$ a direct summand of $N$. 

(1) If $N$ is strongly $M$-Rickart, then $N'$ is strongly $M'$-Rickart.

(2) If $N$ is dual strongly $M$-Rickart, then $N'$ is dual strongly $M'$-Rickart.
\end{coll}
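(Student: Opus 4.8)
The goal is Corollary \ref{c1:summand}: given that $M'$ is a direct summand of $M$ and $N'$ is a direct summand of $N$, deduce the strong Rickart property for $N'$ relative to $M'$ from the one for $N$ relative to $M$. The natural plan is to recognize this as a direct consequence of Theorem \ref{t1:epimono}, which handles the more general situation of an epimorphism $r:M\to M'$ and a monomorphism $s:N'\to N$. The key observation is that a direct summand comes equipped with both a split epimorphism and a split monomorphism, so I can extract from the summand data exactly the two morphisms the theorem requires.

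The plan is as follows. For part (1), since $M'$ is a direct summand of $M$, write $M=M'\oplus M''$ and let $r:M\to M'$ be the canonical projection; this is an epimorphism (indeed a retraction). Since $N'$ is a direct summand of $N$, write $N=N'\oplus N''$ and let $s:N'\to N$ be the canonical injection; this is a monomorphism (indeed a section). With these choices of $r$ and $s$, the hypotheses of Theorem \ref{t1:epimono}(1) are satisfied: $r:M\to M'$ is an epimorphism, $s:N'\to N$ is a monomorphism, and $N$ is strongly $M$-Rickart by assumption. Theorem \ref{t1:epimono}(1) then yields directly that $N'$ is strongly $M'$-Rickart.

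Part (2) follows by the duality principle in abelian categories, exactly as the paper's convention dictates, or alternatively by applying Theorem \ref{t1:epimono}(2) to the same canonical projection $r$ and injection $s$.

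There is essentially no obstacle here, since all the substantive work was done in Theorem \ref{t1:epimono}; the only point to verify is the trivial fact that the projection and injection associated with a direct summand are respectively an epimorphism and a monomorphism, which is immediate. I therefore expect the proof to be a one-line invocation of the theorem with the appropriate choice of the two morphisms.

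\begin{proof} (1) Since $M'$ is a direct summand of $M$, the canonical projection $r:M\to M'$ is an epimorphism. Since $N'$ is a direct summand of $N$, the canonical injection $s:N'\to N$ is a monomorphism. As $N$ is strongly $M$-Rickart, Theorem \ref{t1:epimono} yields that $N'$ is strongly $M'$-Rickart. \end{proof}
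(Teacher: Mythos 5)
Your proof is correct and is exactly the argument the paper intends: the paper states Corollary~\ref{c1:summand} without proof precisely because it is the immediate specialization of Theorem~\ref{t1:epimono} to the canonical projection $r:M\to M'$ (an epimorphism) and the canonical injection $s:N'\to N$ (a monomorphism) supplied by the direct-summand decompositions, with part (2) following dually.
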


\begin{theo} \label{t1:extensions} Let $\mathcal{A}$ be an abelian category. 
\begin{enumerate} \item Consider a short exact sequence $$0\to N_1\to N\to N_2\to 0$$ and an object $M$ of $\mathcal{A}$
such that $N_1$ and $N_2$ are strongly $M$-Rickart. Then $N$ is strongly $M$-Rickart.  
\item Consider a short exact sequence $$0\to M_1\to M\to M_2\to 0$$ and an object $N$ of $\mathcal{A}$ such that $N$ is
dual strongly $M_1$-Rickart and dual strongly $M_2$-Rickart. Then $N$ is dual strongly $M$-Rickart. 
\end{enumerate}
\end{theo}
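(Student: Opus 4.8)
The plan is to prove part (1) only, since part (2) follows by the duality principle in $\mathcal{A}$. Write the sequence as $0\to N_1\xrightarrow{i} N\xrightarrow{q} N_2\to 0$ and fix an arbitrary morphism $f:M\to N$; the goal is to show that ${\rm ker}(f)$ is a fully invariant section of $M$. First I would push $f$ forward to $N_2$ by forming $qf:M\to N_2$. Since $N_2$ is strongly $M$-Rickart, $k_1:={\rm ker}(qf):K_1\to M$ is a fully invariant section, so in particular $K_1$ is a direct summand of $M$. As $qfk_1=0$ and $i={\rm ker}(q)$, the morphism $fk_1$ factors uniquely as $fk_1=ig$ for some $g:K_1\to N_1$.

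Next I would bring the hypothesis on $N_1$ down to the summand $K_1$: by Corollary~\ref{c1:summand}(1), strong $M$-Rickartness of $N_1$ passes to strong $K_1$-Rickartness, so ${\rm ker}(g):G\to K_1$ is a fully invariant section of $K_1$. The key step is then to identify ${\rm ker}(f)$, as a subobject of $M$, with the composite $k_1\,{\rm ker}(g)$. In one direction, $f\,{\rm ker}(f)=0$ gives $qf\,{\rm ker}(f)=0$, so ${\rm ker}(f)$ factors through $k_1$ as ${\rm ker}(f)=k_1t$; then $igt=fk_1t=0$ and the monomorphism $i$ yield $gt=0$, so $t$ factors through ${\rm ker}(g)$. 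In the other direction $fk_1\,{\rm ker}(g)=ig\,{\rm ker}(g)=0$ shows $k_1\,{\rm ker}(g)$ factors through ${\rm ker}(f)$. Since all the maps in sight are monomorphisms, mutual factorization forces the two subobjects to coincide.

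To finish, I would invoke the two composition properties. The composite $k_1\,{\rm ker}(g)$ is a section because $k_1$ and ${\rm ker}(g)$ are, and it is a fully invariant kernel by Lemma~\ref{l1:comp}(1), being the composition of the fully invariant kernel ${\rm ker}(g)$ in $K_1$ with the fully invariant kernel $k_1$ in $M$. Hence ${\rm ker}(f)$ is a fully invariant section of $M$, which is precisely the requirement for $N$ to be strongly $M$-Rickart. I expect the only genuinely delicate point to be the subobject identification ${\rm ker}(f)=k_1\,{\rm ker}(g)$, together with the careful bookkeeping needed to route full invariance through Corollary~\ref{c1:summand} and Lemma~\ref{l1:comp}; the section part is exactly as in the non-strong extension result, so the strong refinement rests entirely on these two cited lemmas.
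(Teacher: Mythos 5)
Your proof is correct and follows essentially the same route as the paper's: both reduce along the fully invariant section $\ker(qf)$, transfer strong $M$-Rickartness of $N_1$ to the summand $\mathrm{Ker}(qf)$ (you via Corollary~\ref{c1:summand}, the paper via Theorem~\ref{t1:epimono}, from which that corollary is derived), and conclude by composing fully invariant sections using Lemma~\ref{l1:comp}. The only difference is presentational: you verify the identification $\ker(f)=k_1\,\ker(g)$ by an explicit chase, whereas the paper encodes it in a commutative diagram with exact rows and columns.
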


\begin{proof} (1) We use the idea of the proof of \cite[Theorem~2.12]{CK}. 

Let $f:M\to N$ be a morphism in $\mathcal{A}$. We have the following induced commutative diagram:
$$\SelectTips{cm}{}
\xymatrix{
 & 0 \ar[d] & 0 \ar[d] & \\
 & K_1 \ar@{=}[r] \ar[d]_{i_1} & K_1 \ar[d]^{i_2} & \\ 
0 \ar[r] & K_2 \ar[r]^-{k_2} \ar[d]_{f_1} & M \ar[r]^-{f_2} \ar[d]^f & N_2 \ar@{=}[d] \ar[r] & 0 \\
0 \ar[r] & N_1 \ar[r] & N \ar[r] & N_2 \ar[r] & 0 
}$$
with exact rows and columns. Since $N_2$ is strongly $M$-Rickart, $k_2$ is a fully invariant section. 
Then there is an epimorphism $p_2:M\to K_2$ such that $p_2k_2=1_{K_2}$. 
Since $N_1$ is strongly $M$-Rickart, it follows that $N_1$ is also strongly $K_2$-Rickart by Theorem
\ref{t1:epimono}. Then $i_1$ is a fully invariant section. It follows that 
$i_2=k_2i_1$ is a fully invariant section by Lemma \ref{l1:comp}. This shows that $N$ is strongly $M$-Rickart.
\end{proof}

\begin{ex} \rm Let $\mathbb{Z}_4=\mathbb{Z}/4\mathbb{Z}$ and consider the natural epimorphism $\mathbb{Z}\to
\mathbb{Z}_4$ of abelian groups. Since $\Hom_{\mathbb{Z}}(\mathbb{Z}_4,\mathbb{Z})=0$, $\mathbb{Z}$ is clearly a
strongly $\mathbb{Z}_4$-Rickart abelian group. But $\mathbb{Z}_4$ is not a $\mathbb{Z}_4$-Rickart abelian group
\cite[Example~2.5]{LRR12}, and so $\mathbb{Z}_4$ is not strongly $\mathbb{Z}_4$-Rickart.
\end{ex}

\section{(Co)products of (dual) strongly relative Rickart objects}

The main theorem on finite (co)products involving (dual) strongly relative Rickart objects is the following one, which is now
easily deduced from our previous results.  

\begin{theo} \label{t1:ds} Let $\mathcal{A}$ be an abelian category.
\begin{enumerate} 
\item Let $M$ and $N_1,\dots,N_n$ be objects of $\mathcal{A}$. Then $\bigoplus_{i=1}^n N_i$ is strongly $M$-Rickart if and only
if $N_i$ is strongly $M$-Rickart for every $i\in \{1,\dots,n\}$.
\item Let $M_1,\dots,M_n$ and $N$ be objects of $\mathcal{A}$. Then $N$ is dual strongly $\bigoplus_{i=1}^n M_i$-Rickart if and
only if $N$ is dual strongly $M_i$-Rickart for every $i\in \{1,\dots,n\}$.
\end{enumerate}
\end{theo}

\begin{proof} This follows by Corollary~\ref{c1:summand} and Theorem~\ref{t1:extensions}.
\end{proof}

We also have the following result on arbitrary (co)products under some finiteness conditions.

\begin{coll} \label{c1:fg} Let $\mathcal{A}$ be an abelian category.
\begin{enumerate} \item Assume that $\mathcal{A}$ has coproducts, let $M$ be a finitely generated object of
$\mathcal{A}$, and let $(N_i)_{i\in I}$ be a family of objects of $\mathcal{A}$. Then $\bigoplus_{i\in I}
N_i$ is strongly $M$-Rickart if and only if $N_i$ is strongly $M$-Rickart for every $i\in I$.

\item Assume that $\mathcal{A}$ has products, let $N$ be a finitely cogenerated object of $\mathcal{A}$, and let
$(M_i)_{i\in I}$ be a family of objects of $\mathcal{A}$ such that $N$ is dual strongly $M_i$-Rickart for every $i\in I$. Then
$N$ is dual strongly $\prod_{i\in I} M_i$-Rickart if and only if $N$ is dual strongly $M_i$-Rickart for every $i\in I$.
\end{enumerate}
\end{coll}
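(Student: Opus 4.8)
The plan is to reduce the infinite (co)product statements to the already-established finite case (Theorem~\ref{t1:ds}) by exploiting the finiteness hypotheses on $M$ and $N$. Throughout I only prove part~(1), since part~(2) follows by the duality principle in abelian categories.

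First consider the forward implication of~(1). If $\bigoplus_{i\in I} N_i$ is strongly $M$-Rickart, then for each fixed $j\in I$ the summand $N_j$ is a direct summand of $\bigoplus_{i\in I} N_i$, so Corollary~\ref{c1:summand}(1) (with $M'=M$) immediately yields that $N_j$ is strongly $M$-Rickart. This direction uses no finiteness at all.

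For the converse, assume each $N_i$ is strongly $M$-Rickart and let $f:M\to \bigoplus_{i\in I} N_i$ be an arbitrary morphism. The key step is to invoke finite generation of $M$: since $M$ is finitely generated and $\bigoplus_{i\in I} N_i$ is the direct union (filtered colimit) of its finite subcoproducts $\bigoplus_{i\in F} N_i$ over finite subsets $F\subseteq I$, the morphism $f$ must factor through one such $\bigoplus_{i\in F} N_i$ for some finite $F$. Write $f=\iota_F g$ where $\iota_F:\bigoplus_{i\in F} N_i\to \bigoplus_{i\in I} N_i$ is the canonical section and $g:M\to \bigoplus_{i\in F} N_i$. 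By Theorem~\ref{t1:ds}(1) the finite coproduct $\bigoplus_{i\in F} N_i$ is strongly $M$-Rickart, so ${\rm ker}(g)$ is a fully invariant section. Since $\iota_F$ is a monomorphism we have ${\rm ker}(f)={\rm ker}(g)$, and this kernel is therefore a fully invariant section of $M$, showing $\bigoplus_{i\in I} N_i$ is strongly $M$-Rickart.

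The main obstacle is justifying the factorization of $f$ through a finite subcoproduct purely from the categorical notion of finite generation. In a general abelian category with coproducts, I would take \emph{finitely generated} to mean precisely that $\Hom_{\mathcal{A}}(M,-)$ commutes with the filtered colimit realizing $\bigoplus_{i\in I} N_i$ as the union of its finite subcoproducts (equivalently, that every morphism from $M$ into such a directed union factors through a member of the system); this is the categorical condition that makes the argument go through, and it is the abstract analogue of the module-theoretic fact that a finitely generated module maps into a direct sum only through finitely many coordinates. Once this factorization property is in hand, the remainder is the routine bookkeeping of identifying ${\rm ker}(f)$ with ${\rm ker}(g)$ via the monomorphism $\iota_F$ and applying the finite result.
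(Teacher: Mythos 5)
Your proposal is correct and follows essentially the same route as the paper: the forward implication via Corollary~\ref{c1:summand}, and the converse by factoring $f:M\to \bigoplus_{i\in I}N_i$ through a finite subcoproduct (using finite generation of $M$), applying Theorem~\ref{t1:ds} to that finite coproduct, and identifying $\mathrm{ker}(f)=\mathrm{ker}(g)$ via the monomorphic inclusion. Your closing discussion of what ``finitely generated'' must mean categorically is exactly the property the paper's proof uses implicitly, so there is no divergence in substance.
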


\begin{proof} (1) The direct implication follows by Corollary~\ref{c1:summand}. For the converse, let $f:M\to
\bigoplus_{i\in I} N_i$ be a morphism in $\mathcal{A}$. Since $M$ is finitely generated, we may write
$f=jf'$ for some morphism $f':M\to \bigoplus_{i\in F} N_i$ and inclusion morphism $j:\bigoplus_{i\in F} N_i\to
\bigoplus_{i\in I} N_i$, where $F$ is a finite subset of $I$. By Theorem~\ref{t1:ds}, $\bigoplus_{i\in F} N_i$ is
strongly $M$-Rickart. Then ${\rm ker}(f)={\rm ker}(f')$ is a fully invariant section. 
Hence $\bigoplus_{i\in I} N_i$ is strongly $M$-Rickart.
\end{proof}

\begin{ex} \label{e1:abc} \rm (a) Theorem~\ref{t1:ds} does not hold in general for arbitrary coproducts. 
Indeed, let $R$ be a strongly regular ring such that there exists a left ideal $I$ which is not finitely generated
(i.e., $R$ is a strongly regular ring which is not semisimple). Then $R$ is strongly $R$-regular, 
hence $R$ is dual strongly $R$-Rickart. Let $f:R^{(I)}\to I$ be the canonical epimorphism, 
and let $i:I\to R$ be the inclusion homomorphism. If $R$ is dual strongly $R^{(I)}$-Rickart, then ${\rm
coker}(if):R\to {\rm Coker}(if)$ is a fully coinvariant retraction, 
hence $I={\rm Im}(f)={\rm Im}(if)$ is a fully invariant direct summand of $R$, and so it
is finitely generated, a contradiction. Hence $R$ is not dual strongly $R^{(I)}$-Rickart.

(b) Theorem~\ref{t1:ds} does not hold in general for (dual) strongly self-Rickart objects.
Indeed, consider the ring $R=\begin{pmatrix}\mathbb{Z}&\mathbb{Z}\\0&\mathbb{Z}\end{pmatrix}$ and the right $R$-modules
$M_1=\begin{pmatrix}\mathbb{Z}&\mathbb{Z}\\0&0\end{pmatrix}$ and $M_2=\begin{pmatrix}0&0\\0&\mathbb{Z}\end{pmatrix}$.
Then $M_1$ and $M_2$ are self-Rickart right $R$-modules \cite[Example~1.2]{LRR12}. Since $M_1$ and $M_2$ are indecomposable, 
they are strongly self-Rickart by Corollary \ref{c1:wduo}. But $R=M_1\oplus M_2$ is not a self-Rickart right $R$-module 
\cite[Example~1.2]{LRR12}, and so it is not strongly self-Rickart.

(c) One cannot change the role of (dual) relative Rickart objects in Theorem~\ref{t1:ds}.
Indeed, $R=\mathbb{Z}[X]$ is self-Rickart \cite[Example~2.11]{LRR12}. Since $R$ is indecomposable, 
it is strongly self-Rickart by Corollary \ref{c1:indec}. But $N$ is not an $M_1\oplus M_2$-Rickart $R$-module 
\cite[Example~2.11]{LRR12}, and so it is not strongly $M_1\oplus M_2$-Rickart.
\end{ex}

The next result gives a necessary condition for an infinite (co)product of objects to be a (dual) strongly self-Rickart object.

\begin{prop} \label{p1:relrickart} Let $(M_i)_{i\in I}$ be a family of objects of an abelian category $\mathcal{A}$.
\begin{enumerate} 
\item Assume that $\prod_{i\in I} M_i$ is a strongly self-Rickart object. Then $M_i$ is strongly $M_j$-Rickart for every $i,j\in I$.
\item Assume that $\bigoplus_{i\in I} M_i$ is a dual strongly self-Rickart object. Then $M_i$ is dual strongly $M_j$-Rickart for every
$i,j\in I$.
\end{enumerate}
\end{prop}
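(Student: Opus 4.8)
The plan is to prove part (1); part (2) then follows by the duality principle. I want to show that for any fixed $i,j\in I$, the object $M_i$ is strongly $M_j$-Rickart, starting from the hypothesis that $P:=\prod_{k\in I} M_k$ is strongly self-Rickart. The natural strategy is to realize $M_i$ and $M_j$ as a direct summand and a subobject (or factor object) of $P$ respectively, and then invoke the transfer results already established, namely Corollary~\ref{c1:summand} and Theorem~\ref{t1:epimono}.

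First I would note that each factor $M_j$ sits inside $P$ as a direct summand via the canonical monomorphism $\iota_j:M_j\to P$, with canonical projection $\pi_j:P\to M_j$ satisfying $\pi_j\iota_j=1_{M_j}$; likewise $M_i$ is a direct summand of $P$. By Corollary~\ref{c1:summand}(1), since $P$ is strongly $P$-Rickart (i.e.\ strongly self-Rickart) and $M_i$ is a direct summand of $P$, it follows that $M_i$ is strongly $M_j'$-Rickart whenever $M_j'$ is a direct summand of $P$; taking $M_j'=M_j$ gives exactly that $M_i$ is strongly $M_j$-Rickart. This is the cleanest route: Corollary~\ref{c1:summand} is stated precisely for a direct summand $M'$ of $M$ and a direct summand $N'$ of $N$, and here both $M_i$ and $M_j$ are direct summands of the single object $P$, so we apply it with $M=N=P$, $N'=M_i$, $M'=M_j$.

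The step I expect to require the most care is the justification that each $M_j$ is genuinely a direct summand of the infinite product $P=\prod_{k\in I}M_k$ in the abelian category $\mathcal{A}$, and the correct bookkeeping of which index plays the role of the source object $M$ and which the target object $N$ in Corollary~\ref{c1:summand}. In an arbitrary abelian category one must be slightly careful that the product exists (which is part of the hypothesis, since $\prod_{i\in I}M_i$ is named) and that the canonical projection $\pi_j$ splits the canonical injection $\iota_j$; this is a standard property of products in additive categories and poses no real difficulty, but it is the place where a reader would want an explicit word. Once the summand claim is in hand, the conclusion is immediate from Corollary~\ref{c1:summand}, with no further computation needed.

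Alternatively, if one prefers to avoid appealing to both a monomorphism and an epimorphism simultaneously, I would observe that $M_j$ is a direct summand of $P$ yields a monomorphism $\iota_j:M_j\to P$ and that $M_i$ is a direct summand of $P$ yields an epimorphism onto $M_i$; one could then invoke Theorem~\ref{t1:epimono}(1) directly with $r=\pi_i:P\to M_i$ the projection epimorphism and $s=\iota_j:M_j\to P$ the injection monomorphism, starting from the strong $P$-Rickart property of $P$ to conclude that $M_i$ is strongly $M_j$-Rickart. Either formulation works; I would present the Corollary~\ref{c1:summand} version as the primary argument since it is the most direct, noting that it rests ultimately on Theorem~\ref{t1:epimono}.
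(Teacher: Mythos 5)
Your proposal is correct and takes essentially the same route as the paper, whose entire proof is an appeal to Theorem~\ref{t1:epimono} applied to the projection $\pi_j:\prod_{k\in I}M_k\to M_j$ and the injection $\iota_i:M_i\to \prod_{k\in I}M_k$; your primary argument via Corollary~\ref{c1:summand} is just the direct-summand packaging of that same theorem. One minor slip: in your alternative formulation the choices $r=\pi_i$, $s=\iota_j$ actually yield that $M_j$ is strongly $M_i$-Rickart (the source must be the codomain of $r$ and the target the domain of $s$), but since the proposition quantifies over all pairs $(i,j)$ this is immaterial.
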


\begin{proof} (1) This follows by Theorem~\ref{t1:epimono}.
\end{proof}

Let $\mathcal{A}$ be an abelian category. Recall that an object $M$ of $\mathcal{A}$ has the \emph{strong summand
intersection property}, briefly \emph{SSIP}, if the intersection of any family of direct summands of $M$ 
is a direct summand of $M$ \cite{Wilson}. Dually, an object $M$ of $\mathcal{A}$ has the \emph{strong
summand sum property}, briefly \emph{SSSP}, if the sum of any family of direct summands of $M$ 
is a direct summand of $M$ \cite{Garcia}.

\begin{theo} \label{t1:SP} Let $\mathcal{A}$ be an abelian category.
\begin{enumerate}
\item Let $M$ be a weak duo object of $\mathcal{A}$ having SSIP, and let $(N_i)_{i\in I}$ be a family of objects of
$\mathcal{A}$ having a product. Then $\prod_{i\in I} N_i$ is
strongly $M$-Rickart if and only if $N_i$ is strongly $M$-Rickart for every $i\in I$.
\item Let $(M_i)_{i\in I}$ be a family of objects of $\mathcal{A}$ having a coproduct, and let $N$ be a weak duo object of
$\mathcal{A}$ having SSSP. Then $N$ is dual strongly $\bigoplus_{i\in I} M_i$-Rickart if and only if 
$N$ is dual strongly $M_i$-Rickart for every $i\in I$.
\end{enumerate}
\end{theo}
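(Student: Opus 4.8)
The plan is to prove part~(1) directly and to obtain part~(2) by the duality principle, as is done throughout the paper. The forward implication of~(1) is essentially immediate. For each fixed $j\in I$ the canonical projection $\pi_j:\prod_{i\in I}N_i\to N_j$ admits a section $\iota_j:N_j\to \prod_{i\in I}N_i$, namely the unique morphism determined by the universal property of the product with $\pi_i\iota_j=1_{N_j}$ for $i=j$ and $\pi_i\iota_j=0$ for $i\neq j$. Thus $N_j$ is a direct summand of $\prod_{i\in I}N_i$, and so, applying Corollary~\ref{c1:summand}~(1) with the direct summand $M$ of itself and the direct summand $N_j$ of $\prod_{i\in I}N_i$, if $\prod_{i\in I}N_i$ is strongly $M$-Rickart, then $N_j$ is strongly $M$-Rickart. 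Note that this half does not use the SSIP or weak duo hypotheses.

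For the converse, I would start from an arbitrary morphism $f:M\to \prod_{i\in I}N_i$ and set $f_i=\pi_i f:M\to N_i$. The first step is to identify ${\rm Ker}(f)$ with the intersection $\bigcap_{i\in I}{\rm Ker}(f_i)$ of subobjects of $M$. One direction is clear: $f_i\,{\rm ker}(f)=\pi_i f\,{\rm ker}(f)=0$, so ${\rm ker}(f)$ factors through each ${\rm ker}(f_i)$. For the other direction, if a subobject $x:X\to M$ satisfies $f_i x=0$ for every $i$, then $\pi_i f x=0$ for every $i$, hence $fx=0$ because the projections $\pi_i$ are jointly monic, so $x$ factors through ${\rm ker}(f)$. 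This exhibits ${\rm ker}(f)$ as the greatest lower bound of the family $\bigl({\rm Ker}(f_i)\bigr)_{i\in I}$, i.e.\ as the meet $\bigcap_{i\in I}{\rm Ker}(f_i)$; in particular this meet exists (it is the kernel, which exists in the abelian category).

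The remaining two steps then invoke the two standing hypotheses in turn. Since each $N_i$ is strongly $M$-Rickart, every ${\rm ker}(f_i)$ is a fully invariant section, in particular a direct summand of $M$. Because $M$ has the SSIP, the intersection $\bigcap_{i\in I}{\rm Ker}(f_i)$ is a direct summand of $M$; by the identification above this says exactly that ${\rm ker}(f)$ is a section. Finally, since $M$ is weak duo, every section into $M$ is fully invariant, so ${\rm ker}(f)$ is a fully invariant section. As $f$ was arbitrary, $\prod_{i\in I}N_i$ is strongly $M$-Rickart, completing the converse.

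The one step I would treat most carefully — and the only place where anything could go wrong — is the identification ${\rm Ker}(f)=\bigcap_{i\in I}{\rm Ker}(f_i)$ at the level of subobjects in an abstract abelian category, since this is what licenses the passage to the SSIP: I must make sure that the "intersection of direct summands" appearing in the definition of the SSIP is precisely the meet realized here by ${\rm ker}(f)$. Once this matching is pinned down, no genuine obstacle remains; the weak duo and SSIP hypotheses each do a single clean job (turning ${\rm ker}(f)$ into a summand, and a summand into a fully invariant subobject, respectively), and no finiteness restriction on $I$ is needed beyond the assumed existence of the product $\prod_{i\in I}N_i$.
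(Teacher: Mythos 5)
Your proposal is correct and follows essentially the same route as the paper: the forward implication via the direct-summand inclusion $N_j\to\prod_{i\in I}N_i$ (the paper cites Theorem~\ref{t1:epimono}, of which your Corollary~\ref{c1:summand} is the special case), and the converse by identifying ${\rm Ker}(f)$ with $\bigcap_{i\in I}{\rm Ker}(f_i)$, then applying SSIP to get a summand and the weak duo hypothesis to get full invariance. Your careful verification that ${\rm ker}(f)$ is the meet of the ${\rm Ker}(f_i)$ via the jointly monic projections is a detail the paper leaves implicit, but the argument is the same.
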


\begin{proof} (1) If $\prod_{i\in I} N_i$ is strongly $M$-Rickart, then $N_i$ is strongly $M$-Rickart for every $i\in I$ by Theorem
\ref{t1:epimono}. Conversely, assume that $N_i$ is strongly $M$-Rickart for every $i\in I$. Let $f:M\to \prod_{i\in I} N_i$
be a morphism in $\mathcal{A}$. For every $i\in I$, denote by $p_i:\prod_{i\in I} N_i\to N_i$ the canonical projection
and $f_i=p_if:M\to N_i$. Since $N_i$ is strongly $M$-Rickart, ${\rm Ker}(f_i)$ is a 
fully invariant direct summand of $M$ for every $i\in I$. Since $M$ is weak duo and has SSIP, 
it follows that ${\rm Ker}(f)=\bigcap_{i\in I} {\rm Ker}(f_i)$ is a fully invariant direct summand of $M$. 
Hence $\prod_{i\in I} N_i$ is strongly $M$-Rickart.
\end{proof}

\begin{theo} \label{t1:homzero} 
Let $\mathcal{A}$ be an abelian category. Let $M=\bigoplus_{i\in I}M_i$ be a direct sum of objects of $\mathcal{A}$. 
\begin{enumerate}
\item Then $M$ is strongly self-Rickart if and only if $M_i$ is strongly self-Rickart for every $i\in I$, 
and ${\rm Hom}_{\mathcal{A}}(M_i,M_j)=0$ for every $i,j\in I$ with $i\neq j$.
\item Then $M$ is dual strongly self-Rickart if and only if $M_i$ is dual strongly self-Rickart for every $i\in I$, 
and ${\rm Hom}_{\mathcal{A}}(M_i,M_j)=0$ for every $i,j\in I$ with $i\neq j$.
\end{enumerate}
\end{theo}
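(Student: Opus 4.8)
The plan is to prove part~(1) and obtain part~(2) by the duality principle. Write $\iota_i:M_i\to M$ and $\pi_i:M\to M_i$ for the structural injections and projections of the direct sum, so that $\pi_i\iota_i=1_{M_i}$ and $\pi_k\iota_i=0$ for $k\neq i$. The whole argument will rest on Corollary~\ref{c1:wduo}, Proposition~\ref{p1:strring} and Lemma~\ref{l1:semic}, together with the observation that the vanishing of the cross-$\Hom$'s forces every endomorphism of $M$ to be diagonal.

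For the forward implication I would first note that each $M_i$ is a direct summand of both the domain and the codomain of $1_M$, so Corollary~\ref{c1:summand} instantly yields that $M_i$ is strongly $M_i$-Rickart, i.e. strongly self-Rickart. For the orthogonality I would use that $M$ is weak duo (Corollary~\ref{c1:wduo}), so each section $\iota_i$ is fully invariant. Given $i\neq j$ and a morphism $g:M_i\to M_j$, I would form $h=\iota_j g\pi_i\in\End_{\mathcal A}(M)$; full invariance of $\iota_i$ gives $h\iota_i=\iota_i\beta$ for some $\beta$, whereas $h\iota_i=\iota_j g$ by direct computation, and composing with $\pi_j$ forces $g=\pi_j\iota_j g=\pi_j\iota_i\beta=0$. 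Hence $\Hom_{\mathcal A}(M_i,M_j)=0$.

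For the converse I would assume each $M_i$ is strongly self-Rickart and $\Hom_{\mathcal A}(M_i,M_j)=0$ for $i\neq j$. The first step is to diagonalize: for $f\in\End_{\mathcal A}(M)$ put $f_i=\pi_if\iota_i$; orthogonality gives $\pi_kf\iota_i=0$ for $k\neq i$, so every projection of $f\iota_i-\iota_if_i$ vanishes and hence $f\iota_i=\iota_if_i$, that is $f=\bigoplus_i f_i$. This makes $f\mapsto(f_i)_i$ a ring isomorphism $\End_{\mathcal A}(M)\cong\prod_i\End_{\mathcal A}(M_i)$. Each factor is abelian by Proposition~\ref{p1:strring}, so the product is abelian, so $\End_{\mathcal A}(M)$ is abelian, and then Lemma~\ref{l1:semic} shows every section of $M$ is fully invariant, i.e. $M$ is weak duo. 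By Corollary~\ref{c1:wduo} it remains to check that $M$ is self-Rickart. Since each $M_i$ is strongly self-Rickart, $\ker(f_i)$ is a section, say $M_i=K_i\oplus L_i$ with $K_i=\ker(f_i)$ and $f_i$ monic on $L_i$; then $\bigoplus_iK_i$ is a direct summand of $M$ (a coproduct of split monomorphisms is split), it is killed by $f$, and $f$ is monic on the complement $\bigoplus_iL_i$, so that $\ker(f)=\bigoplus_iK_i$ is a section.

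I expect the genuine difficulty to be categorical rather than computational, and to sit in two places: the diagonalization needs the coproduct projections $(\pi_i)_i$ to be jointly monic, and the self-Rickart step needs $\bigoplus_i(f_i|_{L_i})$, a coproduct of monomorphisms, to remain a monomorphism. Both hold once the ambient category has exact coproducts and the product $\prod_iM_i$ exists, since then the canonical morphism $\bigoplus_iM_i\to\prod_iM_i$ is a monomorphism; this covers the Grothendieck, (graded) module and comodule categories targeted by the applications. The hard part of the write-up is thus to isolate and justify exactly these two structural facts (or the hypotheses guaranteeing them) so that the diagonal argument is rigorous; the surrounding algebra of kernels, semicentral idempotents and the weak duo property is then routine via the quoted results.
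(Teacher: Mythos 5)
Your argument is, in substance, the paper's own proof. The forward direction coincides with it exactly (Corollary~\ref{c1:summand} for the summands; full invariance of the sections $\iota_i$, i.e.\ the weak duo property via Corollary~\ref{c1:wduo}, to kill the cross-homs). In the converse, your route --- diagonalize $f$, observe that $\End_{\mathcal{A}}(M)\cong\prod_{i\in I}\End_{\mathcal{A}}(M_i)$ is abelian, get weak duo from Lemma~\ref{l1:semic}, and check self-Rickartness componentwise --- is only a cosmetic reshuffle of the paper's, which takes $e=\bigoplus_i e_i$ with each $e_i$ central (by Proposition~\ref{p1:strring}) and applies Lemma~\ref{l1:semic} directly to conclude that $\bigoplus_i k_i$ is a fully invariant section.

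What deserves emphasis is your closing paragraph: the two structural facts you isolate are genuinely needed, they cannot be proved in an arbitrary abelian category, and the paper's proof uses both of them silently, namely in the steps ``$f=\bigoplus_{i\in I}f_i$'' (the coproduct projections $(\pi_j)_{j\in I}$ must be jointly monic) and ``${\rm Ker}(f)=\bigoplus_{i\in I}{\rm Ker}(f_i)$'' (coproducts must preserve kernels). For finite $I$ both are automatic, since a finite coproduct is a biproduct and $1_M=\sum_i\iota_i\pi_i$. For infinite $I$ they can fail, and then so can the theorem itself. Indeed, work in $\mathcal{A}=\mathbf{Ab}^{\mathrm{op}}$ and take $M_0=\mathbb{Q}$ and $M_j=\mathbb{Z}/p_j\mathbb{Z}$ for distinct primes $p_j$: all cross-homs vanish in $\mathcal{A}$, each $M_i$ is strongly self-Rickart in $\mathcal{A}$ (equivalently, dual strongly self-Rickart in $\mathbf{Ab}$), and the coproduct of the $M_i$ in $\mathcal{A}$ is the product $P=\mathbb{Q}\times\prod_j\mathbb{Z}/p_j\mathbb{Z}$ of $\mathbf{Ab}$. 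Since $(\prod_j\mathbb{Z}/p_j\mathbb{Z})/(\bigoplus_j\mathbb{Z}/p_j\mathbb{Z})$ is a nonzero $\mathbb{Q}$-vector space, there is a nonzero homomorphism $\varphi:\prod_j\mathbb{Z}/p_j\mathbb{Z}\to\mathbb{Q}$ vanishing on $\bigoplus_j\mathbb{Z}/p_j\mathbb{Z}$; the endomorphism $(q,x)\mapsto(\varphi(x),x)$ of $P$ then has image the graph of $\varphi$, which is a direct summand of $P$ but is not fully invariant (compose with $(q,x)\mapsto(q,0)$). Hence $P$ is not dual strongly self-Rickart in $\mathbf{Ab}$, i.e.\ the coproduct is not strongly self-Rickart in $\mathcal{A}$, even though the right-hand side of the theorem holds. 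So your caution is exactly right: the diagonal argument --- yours and the paper's alike --- is complete for finite $I$, while for infinite $I$ it needs hypotheses guaranteeing that $\bigoplus_i M_i\to\prod_i M_i$ is a monomorphism and that coproducts are exact; these hold in AB5 (Grothendieck) categories, hence in all the module, graded module and comodule categories the paper applies the result to, but not in an arbitrary abelian category.
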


\begin{proof} (1) Assume that $M$ is strongly self-Rickart. Then $M_i$ is a fully invariant direct summand of $M$ 
for every $i\in I$. It follows that ${\rm Hom}_{\mathcal{A}}(M_i,M_j)=0$ for every $i,j\in I$ with $i\neq j$. 
Also, $M_i$ is strongly self-Rickart for every $i\in I$ by Corollary \ref{c1:summand}.

Conversely, assume that $M_i$ is strongly self-Rickart for every $i\in I$, 
and ${\rm Hom}_{\mathcal{A}}(M_i,M_j)=0$ for every $i,j\in I$ 
with $i\neq j$. Let $f:M\to M$ be a morphism in $\mathcal{A}$. Its associated matrix has zero entries except for 
the entries $(i,i)$ with $i\in I$, which are some morphisms $f_i:M_i\to M_i$. It follows that $f=\bigoplus_{i\in I}f_i$. 
Then $K={\rm Ker}(f)=\bigoplus_{i\in I}{\rm Ker}(f_i)$. Since $M_i$ is strongly self-Rickart, 
$k_i:K_i={\rm Ker}(f_i)\to M_i$ is a fully invariant section for every $i\in I$. For every $i\in I$,
let $p_i:M_i\to K_i$ be the canonical projection. Then for every $i\in I$, we have $p_ik_i=1_{K_i}$ and 
$e_i=k_ip_i\in {\rm End}_{\mathcal{A}}(M_i)$ is an idempotent. Then $e_i$ is a central idempotent 
for every $i\in I$ by Proposition \ref{p1:strring}. We have $k=\bigoplus_{i\in I}k_i:K\to M$ and 
let $p=\bigoplus_{i\in I}p_i:M\to K$. Then $pk=1_K$ and 
$e=kp=\bigoplus_{i\in I}k_ip_i=\bigoplus_{i\in I}e_i\in {\rm End}_{\mathcal{A}}(M)$ is a central idempotent. 
It follows that $k$ is a fully invariant section by Lemma \ref{l1:semic}. Hence $M$ is strongly self-Rickart. 

(2) This is not completely dual to (1), but it follows in a similar way as (1) by using images instead of kernels.
\end{proof}

\begin{ex} \rm The indecomposable abelian groups $\mathbb{Z}$ and $\mathbb{Z}_p$ for some prime $p$ are strongly self-Rickart. 
By Theorem \ref{t1:homzero}, $\mathbb{Z}\oplus \mathbb{Z}_p$ is not strongly self-Rickart, 
because ${\rm Hom}_{\mathbb{Z}}(\mathbb{Z},\mathbb{Z}_p)\neq 0$.  
\end{ex}

The above properties allow us to give the following results on the structure of (dual) strongly self-Rickart modules 
over a Dedekind domain, and in particular, (dual) strongly self-Rickart abelian groups.

\begin{coll} \label{c1:dede}
Let $R$ be a Dedekind domain with quotient field $K$. 
\begin{enumerate}
\item 
\begin{enumerate}[(i)] 
\item A non-zero torsion $R$-module $M$ is strongly self-Rickart if and only if 
$M\cong \bigoplus_{i\in I} R/P_i$ for some distinct maximal ideals $P_i$ of $R$.
\item A finitely generated $R$-module $M$ is strongly self-Rickart if and only if 
$M\cong J$ for some ideal $J$ of $R$ or $M\cong \bigoplus_{i=1}^kR/P_i$ 
for some distinct maximal ideals $P_i$ of $R$.
\item A non-zero injective $R$-module $M$ is strongly self-Rickart if and only if $M\cong K$.
\end{enumerate}
\item
\begin{enumerate}[(i)] 
\item A non-zero torsion $R$-module $M$ is dual strongly self-Rickart if and only if 
$M\cong \bigoplus_{i\in I} E(R/P_i)$ or $M\cong \bigoplus_{i\in I} R/P_i$ for some distinct maximal ideals $P_i$ of $R$.
\item A non-zero finitely generated $R$-module $M$ is dual strongly self-Rickart if and only if 
$M\cong \bigoplus_{i=1}^kR/P_i$ for some distinct maximal ideals $P_i$ of $R$.
\item A non-zero injective $R$-module $M$ is dual strongly self-Rickart if and only if 
$M\cong \bigoplus_{i\in I} M_i$ for some distinct $R$-modules $M_i$ which are either $K$ or 
$E(R/P_i)$ for some maximal ideal $P_i$ of $R$.
\end{enumerate}
\end{enumerate}
\end{coll}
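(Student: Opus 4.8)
The plan is to reduce the whole classification to the interplay of three ingredients already at hand: the structure theory of modules over a Dedekind domain, the reduction of the strong properties to the ordinary ones on indecomposables (Corollary~\ref{c1:indec}), and the decomposition criterion of Theorem~\ref{t1:homzero}, which says that a direct sum is (dual) strongly self-Rickart precisely when each summand is and $\Hom_{\mathcal{A}}(M_i,M_j)=0$ for all $i\neq j$. So in each of the six cases I would first write $M$ in its canonical decomposition into indecomposables, then decide which indecomposable blocks survive, and finally use the $\Hom$-vanishing to read off which combinations of surviving blocks are admissible.

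First I would record the decompositions. For a torsion module I would use the primary decomposition $M=\bigoplus_P M(P)$ together with the fact that over a Dedekind domain the primary blocks break into cyclic modules $R/P^{k}$ and Pr\"ufer-type modules $E(R/P)$; for a finitely generated module I would use the Steinitz normal form $R^{n-1}\oplus J\oplus\bigoplus_{i=1}^{k}R/P_i^{k_i}$ with $J$ an ideal; for an injective module I would use the decomposition into copies of $K$ and of the $E(R/P)$.

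Second I would determine which indecomposable blocks are self-Rickart (respectively dual self-Rickart), all computations being essentially local. The simple module $R/P$ is both, since a simple object is trivially (dual) self-Rickart; $R/P^{k}$ with $k\geq 2$ is neither, because multiplication by a uniformizer has a proper nonzero kernel and a proper nonzero image, neither of which is a summand of an indecomposable; $R$ and a nonzero ideal $J$ are self-Rickart (torsion-free of rank one, so every nonzero endomorphism is monic) but not dual self-Rickart (a nonunit has non-summand image); $K$ has a division endomorphism ring, hence is both; and $E(R/P)$ is dual self-Rickart (being divisible, every nonzero endomorphism is epic) but not self-Rickart (the socle is a non-summand kernel). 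By Corollary~\ref{c1:indec} each of these facts passes verbatim to the strong versions, so in Part~(1) the only admissible blocks are $R/P$, $R$, $J$, $K$, while in Part~(2) they are $R/P$ and $E(R/P)$ (together with $K$ in the injective case).

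Third I would impose the $\Hom$-vanishing of Theorem~\ref{t1:homzero}: this is what forces the primes to be distinct and produces the exact shape of each list, after removing the blocks that fail the finiteness constraints (neither $E(R/P)$ nor $K$ is finitely generated, and the torsion-free rank-one blocks fail dual self-Rickartness, which is what reduces the finitely generated cases to sums of simples). The main obstacle is precisely the bookkeeping of the groups $\Hom(R/P,R/Q)$, $\Hom(E(R/P),E(R/Q))$, $\Hom(K,K)$ and the mixed groups: it is the pattern of their vanishing and non-vanishing (nonzero exactly when two blocks share a prime, or involve $K$, with maps into a Pr\"ufer block demanding particular care) that decides which primes may repeat and which block types may be combined, and hence yields the stated decompositions. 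A secondary technical point, which I would dispatch by appealing to the standard primary and divisible/reduced decompositions over a Dedekind domain rather than to a na\"ive Krull--Schmidt argument, is the reduction to indecomposables for torsion and injective modules that are not finitely generated.
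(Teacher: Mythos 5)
Your reduction to indecomposables is exactly the step that fails in the torsion cases, and it cannot be dispatched by ``standard primary and divisible/reduced decompositions'': over a Dedekind domain a reduced $P$-primary module need not be a direct sum of indecomposables at all. For instance, over $\mathbb{Z}$ the torsion subgroup of $\prod_{n\geq 1}\mathbb{Z}/p^n\mathbb{Z}$ is a reduced $p$-group that is not a direct sum of cyclic groups, while cyclic $p$-groups and Pr\"ufer groups are the only indecomposable torsion abelian groups. Consequently, in parts (1)(i) and (2)(i) the necessity direction of your argument never gets started: you cannot write an arbitrary torsion strongly self-Rickart module as $\bigoplus_i M_i$ with $M_i$ indecomposable and then feed the decomposition to Theorem~\ref{t1:homzero}. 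The paper supplies precisely this missing structural input by a different mechanism: by Corollary~\ref{c1:wduo}, strongly self-Rickart equals self-Rickart plus weak duo, and the structure of weak duo torsion (and finitely generated) modules over a Dedekind domain is quoted from \cite{OHS}; that classification applies to arbitrary torsion modules with no decomposition hypothesis, and only afterwards does the paper filter the resulting list by (dual) self-Rickartness, using essentially the block computations you describe ($R/P^n$ works only for $n=1$, $E(R/P)$ is not self-Rickart, ideals are self-Rickart since $R$ is hereditary). Your treatment of the finitely generated case (Steinitz form) and of the injective case (Kaplansky's theorem \cite{K52}) is sound, because there the needed decompositions do exist and your block analysis via Corollary~\ref{c1:indec} matches the paper's.

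A second problem: if you carry out the $\Hom$ bookkeeping that you postpone, it does not yield the stated lists in part (2), so your route cannot terminate in the statement as written. Indeed, $\Hom_R(K,E(R/P))\neq 0$ (extend $R\twoheadrightarrow R/P\subseteq E(R/P)$ along $R\subseteq K$ by injectivity), so Theorem~\ref{t1:homzero} forbids the combination $K\oplus E(R/P)$ that (2)(iii) permits; and $\Hom_R(E(R/P),R/Q)=0=\Hom_R(R/Q,E(R/P))$ for $P\neq Q$, so Theorem~\ref{t1:homzero} declares $E(R/P)\oplus R/Q$ dual strongly self-Rickart, although it appears on neither list of (2)(i). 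One can check directly that $\mathbb{Q}\oplus\mathbb{Z}_{p^\infty}$ is in fact not dual strongly self-Rickart: for $0\neq\phi\in\Hom_{\mathbb{Z}}(\mathbb{Q},\mathbb{Z}_{p^\infty})$, the image of $(x,y)\mapsto(x,\phi(x))$ is a direct summand (being divisible) but is not fully invariant. So the tension you flag as ``demanding particular care'' is real: it reflects the fact that the paper's transcription of \cite[Theorem~3.10]{OHS} omits the mixed direct sums $\bigoplus_i M_i$ with $M_i\in\{R/P_i^{n_i},E(R/P_i)\}$ for distinct $P_i$, and parts (2)(i) and (2)(iii) of the statement inherit this defect. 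Any proof along your route must confront this discrepancy explicitly; your proposal leaves it unresolved.
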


\begin{proof} By Corollary \ref{c1:wduo}, $M$ is strongly self-Rickart if and only if $M$ is self-Rickart and weak duo, 
while $M$ is dual strongly self-Rickart if and only if $M$ is dual self-Rickart and weak duo. By Corollary \ref{c1:summand},
every direct summand of a (dual) strongly self-Rickart module is (dual) strongly self-Rickart.

We use the following structure theorems on weak duo modules over a Dedekind domain $R$: 
(i) a non-zero torsion $R$-module $M$ is (weak) duo if and only if $M\cong \bigoplus_{i\in I} E(R/P_i)$ or 
$M\cong \bigoplus_{i\in I} R/P_i^{n_i}$ for some distinct maximal ideals $P_i$ of $R$ and positive integers $n_i$ \cite[Theorem~3.10]{OHS}; 
(ii) a finitely generated $R$-module $M$ is (weak) duo if and only if $M\cong J$ for some ideal $J$ of $R$ or 
$M\cong \bigoplus_{i=1}^kR/P_i^{n_i}$ for some distinct maximal ideals $P_1,\dots,P_k$ of $R$ 
and positive integers $n_1,\dots,n_k$ \cite[Corollary~3.11]{OHS}.

Let $P$ be a maximal ideal of $R$ and $n$ a positive integer. 

(1) (i) By \cite[Theorem~5.6]{LRR10}, the torsion $R$-module $R/P^n$ is self-Rickart if and only if $n=1$ if and only if 
it is simple. Now for any distinct maximal ideals $P_i$ of $R$, $\bigoplus_{i\in I} R/P_i$ is self-Rickart, 
as a semisimple $R$-module.

If $E(R/P)=R/P$, then it is self-Rickart by the first part. Now assume that $E(R/P)\neq R/P$. 
Consider $f\in {\rm End}_R(E(R/P))$ defined by $f(a)=pa$ for some $0\neq p\in P$. 
If $f=0$, then $p\in {\rm Ann}_R(E(R/P))=0$ \cite[Proposition~2.26, Corollary~1]{SV}, which is a contradiction. 
Hence $f\neq 0$, and so ${\rm Ker}(f)\neq E(R/P)$. Let $0\neq a\in E(R/P)$. 
Since $R/P$ is an essential submodule of $E(R/P)$, there exists $r\in R$ such that $0\neq ra\in R/P$. Then $pra=0$, and so 
$0\neq ra\in {\rm Ker}(f)$. Since $E(R/P)$ is uniform, ${\rm Ker}(f)$ is a non-zero proper essential submodule of $E(R/P)$. 
This shows that $E(R/P)$ is not self-Rickart. Now the conclusion follows.

(ii) Since $R$ is hereditary, every submodule of a projective $R$-module is projective, and every projective $R$-module is 
self-Rickart \cite[Theorem~2.26]{LRR10}. Hence every ideal $J$ of $R$ is self-Rickart. 

For any distinct maximal ideals $P_1,\dots,P_k$ of $R$ and positive integers $n_1,\dots,n_k$, 
the direct sum $\bigoplus_{i=1}^kR/P_i^{n_i}$ is self-Rickart if and only if $n_i=1$ for every $i\in \{1,\dots,k\}$. 
Now the conclusion follows.

(iii) By \cite[Theorem~7]{K52} every injective module $M$ over a Dedekind domain $R$ is of the form 
$M=\bigoplus_{i\in I}M_i$, where $M_i$ is either a vector space over $K$ or $E(R/P_i)$ for some prime ideal $P_i$ of $R$. 
Combining it with the structure theorem of non-zero torsion weak duo $R$-modules, 
the fact proved in (i) that $E(R/P)$ is not self-Rickart, and Theorem \ref{t1:homzero}, one deduces that 
the only left possibility is $M\cong K$. But $K$ is clearly strongly self-Rickart.

(2) (i) Let $(P_i)_{i\in I}$ be a family of distinct maximal ideals of $R$. Since $R$ is noetherian, 
$\bigoplus_{i\in I} E(R/P_i)$ is injective. Since $R$ is hereditary, every injective $R$-module is 
dual self-Rickart by \cite[Theorem~2.29]{LRR11}. Hence so is $\bigoplus_{i\in I} E(R/P_i)$.

We claim that the torsion $R$-module $R/P^n$ is dual self-Rickart if and only if $n=1$ if and only if it is simple.
For the non-trivial implication, similarly to the proof of \cite[Theorem~5.6]{LRR10}, 
let $n>1$ and consider $f\in {\rm End}_R(R/P^n)$ defined by $f(\hat{x})=p\hat{x}$ for some $p\in P\setminus P^2$, 
where $\hat{x}=x+P^n$. Then $\hat{p}^{n-1}\in {\rm Ker}(f)\neq \hat{0}$, and so 
${\rm Im}(f)\neq P^n$. Also, ${\rm Im}(f)\neq \hat{0}$, because $f(\hat{1})=\hat{p}\neq \hat{0}$. 
Since $R/P^n$ is uniform, ${\rm Im}(f)$ is a non-zero proper essential submodule of $R/P^n$. 
This shows that $R/P^n$ is not dual self-Rickart.
Consequently, one must have $n=1$. For any distinct maximal ideals $P_i$ of $R$, $\bigoplus_{i\in I} R/P_i$ is dual self-Rickart, 
as a semisimple $R$-module. Now the conclusion follows.

(ii) Since $R$ is noetherian, the conclusion follows by \cite[Theorem~4.14]{LRR11}.

(iii) Using the structure theorems of injective modules and torsion weak duo modules over a Dedekind domain,
we deduce that $M$ must have the form $M\cong \bigoplus_{i\in I} M_i$ for some distinct $R$-modules $M_i$ which are either $K$ or 
$E(R/P_i)$ for some maximal ideal $P_i$ of $R$. But $K$ is clearly dual self-Rickart, which together with (i) imply 
the conclusion.
\end{proof}

\begin{coll} \label{c1:abgr}
\begin{enumerate}
\item 
\begin{enumerate}[(i)] 
\item A non-zero torsion abelian group $G$ is strongly self-Rickart if and only if 
$G\cong \bigoplus_{i\in I} \mathbb{Z}_{p_i}$ for some distinct primes $p_i$.
\item A finitely generated abelian group $G$ is strongly self-Rickart if and only if 
$G\cong \mathbb{Z}$ or $G\cong \bigoplus_{i=1}^k \mathbb{Z}_{p_i}$ for some distinct primes $p_i$.
\item A non-zero injective abelian group $G$ is strongly self-Rickart if and only if $G\cong \mathbb{Q}$.
\end{enumerate}
\item
\begin{enumerate}[(i)] 
\item A non-zero torsion abelian group $G$ is dual strongly self-Rickart if and only if 
$G\cong \bigoplus_{i\in I} \mathbb{Z}_{p_i^{\infty}}$ or $G\cong \bigoplus_{i\in I} \mathbb{Z}_{p_i}$ 
for some distinct primes $p_i$.
\item A non-zero finitely generated abelian group $G$ is dual strongly self-Rickart if and only if 
$G\cong \bigoplus_{i=1}^k \mathbb{Z}_{p_i}$ for some distinct primes $p_i$.
\item A non-zero injective abelian group $G$ is dual strongly self-Rickart if and only if 
$G\cong \bigoplus_{i\in I} G_i$ for some distinct groups $G_i$  
which are either $\mathbb{Q}$ or $\mathbb{Z}_{p_i^{\infty}}$ for some primes $p_i$.
\end{enumerate}
\end{enumerate}
\end{coll}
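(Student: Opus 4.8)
The plan is to obtain this corollary as a direct specialization of Corollary~\ref{c1:dede} to the Dedekind domain $R=\mathbb{Z}$, whose quotient field is $K=\mathbb{Q}$. Since both parts are proved in the same manner, I would treat them in parallel and only spell out the dictionary translating the abstract data of Corollary~\ref{c1:dede} into the concrete objects of abelian group theory.

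First I would record the relevant facts about $\mathbb{Z}$. The maximal ideals of $\mathbb{Z}$ are exactly the ideals $P=p\mathbb{Z}$ for the primes $p$, so that $R/P\cong \mathbb{Z}_p$; the quotient field is $K=\mathbb{Q}$; and the injective hull of the simple module is $E(R/P)=E(\mathbb{Z}_p)\cong \mathbb{Z}_{p^{\infty}}$, the Pr\"ufer group. With these identifications in hand, items (1)(i), (2)(i), (2)(ii) and (2)(iii) become verbatim restatements of the corresponding items of Corollary~\ref{c1:dede}: in (2)(i) one reads $\bigoplus_{i\in I}E(R/P_i)$ as $\bigoplus_{i\in I}\mathbb{Z}_{p_i^{\infty}}$ and $\bigoplus_{i\in I}R/P_i$ as $\bigoplus_{i\in I}\mathbb{Z}_{p_i}$, while in (1)(i) and (2)(ii) only the $\bigoplus R/P_i\cong \bigoplus \mathbb{Z}_{p_i}$ case survives. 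Likewise, for the injective statements (1)(iii) and (2)(iii) one substitutes $K=\mathbb{Q}$ and $E(R/P_i)=\mathbb{Z}_{p_i^{\infty}}$, yielding $G\cong \mathbb{Q}$ in the strongly self-Rickart case and $G\cong \bigoplus_{i\in I}G_i$ with each $G_i$ equal to $\mathbb{Q}$ or some $\mathbb{Z}_{p_i^{\infty}}$ in the dual case.

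The single point that requires a small additional observation is item (1)(ii). There, Corollary~\ref{c1:dede}(1)(ii) allows the possibility $M\cong J$ for an ideal $J$ of $R$; since $\mathbb{Z}$ is a principal ideal domain, every non-zero ideal of $\mathbb{Z}$ is of the form $n\mathbb{Z}\cong \mathbb{Z}$, so this clause collapses to $G\cong \mathbb{Z}$. Hence a finitely generated abelian group is strongly self-Rickart if and only if $G\cong \mathbb{Z}$ or $G\cong \bigoplus_{i=1}^k \mathbb{Z}_{p_i}$, as claimed.

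I do not anticipate a genuine obstacle here, as the statement is purely a translation of Corollary~\ref{c1:dede}; the only care needed is the correct identification of the injective hull $E(\mathbb{Z}_p)\cong \mathbb{Z}_{p^{\infty}}$ and of the non-zero ideals of $\mathbb{Z}$, both of which are standard.
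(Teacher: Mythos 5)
Your proposal is correct and is exactly the intended argument: the paper states Corollary~\ref{c1:abgr} without separate proof precisely because it is the specialization of Corollary~\ref{c1:dede} to $R=\mathbb{Z}$, with the identifications $P=p\mathbb{Z}$, $R/P\cong\mathbb{Z}_p$, $E(R/P)\cong\mathbb{Z}_{p^{\infty}}$, $K=\mathbb{Q}$ that you spell out. Your additional remark that the ideal clause in (1)(ii) collapses to $G\cong\mathbb{Z}$ because $\mathbb{Z}$ is a principal ideal domain is the one genuinely needed observation, and you have it.
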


Corollary \ref{c1:abgr} allows one to easily construct examples of (dual) self-Rickart abelian groups, 
which are not (dual) strongly self-Rickart.

\begin{ex} \label{e1:abgr} \rm The abelian group $\mathbb{Z}_p\oplus \mathbb{Z}_p$ (for some prime $p$) is 
both self-Rickart and dual self-Rickart, being semisimple. But it is neither strongly self-Rickart, 
nor dual strongly self-Rickart by Corollary~\ref{c1:abgr}.
\end{ex}

\section{(Dual) strongly relative Rickart objects: transfer via functors}

Our first result on the transfer of the (dual) strongly relative Rickart properties via (additive) functors 
involves a fully faithful covariant functor. 

\begin{theo} \label{t1:ff} Let $F:\mathcal{A}\to \mathcal{B}$ be a fully faithful covariant functor between abelian
categories, and let $M$ and $N$ be objects of $\mathcal{A}$. 
\begin{enumerate}
\item Assume that $F$ is left exact. Then $N$ is strongly $M$-Rickart in $\mathcal{A}$ if and only if $F(N)$ is strongly $F(M)$-Rickart
in $\mathcal{B}$.
\item Assume that $F$ is right exact. Then $N$ is dual strongly $M$-Rickart in $\mathcal{A}$ if and only if $F(N)$ is dual strongly 
$F(M)$-Rickart in $\mathcal{B}$.
\end{enumerate}
\end{theo}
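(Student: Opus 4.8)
The plan is to prove part (1) and obtain part (2) by duality, as is the paper's convention. The key observation is that a left exact fully faithful covariant functor $F$ preserves and reflects the categorical data that defines ``strongly $M$-Rickart'': kernels, sections, and full invariance. First I would record the standard facts about fully faithful functors. Since $F$ is full and faithful, it induces isomorphisms $\hom{\mathcal{A}}{X}{Y}\cong \hom{\mathcal{B}}{F(X)}{F(Y)}$ natural in both variables; in particular $F$ reflects isomorphisms, reflects zero morphisms, and is injective on objects up to the morphisms that matter. Because $F$ is additive (every functor between abelian categories here is understood to be additive) and left exact, it sends a kernel to a kernel: if $k=\mathrm{ker}(f)$ for $f:M\to N$, then $F(k)=\mathrm{ker}(F(f))$ in $\mathcal{B}$. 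The faithful-and-full property also guarantees that $F$ preserves and reflects sections and retractions, since $gf=1$ in $\mathcal{A}$ holds if and only if $F(g)F(f)=1$ in $\mathcal{B}$, using that $F(1)=1$ and the hom-set bijection.

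The heart of the argument is the correspondence between morphisms $M\to N$ in $\mathcal{A}$ and morphisms $F(M)\to F(N)$ in $\mathcal{B}$. For the forward implication, suppose $N$ is strongly $M$-Rickart in $\mathcal{A}$, and let $g:F(M)\to F(N)$ be an arbitrary morphism in $\mathcal{B}$. By fullness, $g=F(f)$ for a unique $f:M\to N$. Since $N$ is strongly $M$-Rickart, $\mathrm{ker}(f)$ is a fully invariant section in $\mathcal{A}$. Applying $F$ and using left exactness, $F(\mathrm{ker}(f))=\mathrm{ker}(F(f))=\mathrm{ker}(g)$, and this is a section in $\mathcal{B}$ by the section-preservation above. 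It remains to verify that $F(\mathrm{ker}(f))$ is \emph{fully invariant}: given any $h:F(M)\to F(M)$, write $h=F(h')$ with $h':M\to M$; full invariance of $\mathrm{ker}(f)$ in $\mathcal{A}$ gives a factorization $h'\,\mathrm{ker}(f)=\mathrm{ker}(f)\,\alpha$, and applying $F$ transports this factorization to $\mathcal{B}$. Hence $\mathrm{ker}(g)$ is a fully invariant section, so $F(N)$ is strongly $F(M)$-Rickart. The converse implication runs symmetrically: any $f:M\to N$ yields $F(f):F(M)\to F(N)$, whose kernel is a fully invariant section in $\mathcal{B}$ by hypothesis; since $F$ reflects kernels, sections, and factorizations (via the inverse hom-set bijection), $\mathrm{ker}(f)$ is a fully invariant section in $\mathcal{A}$.

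The main obstacle I anticipate is the careful verification that $F$ reflects full invariance, rather than merely preserving it. Preservation uses only that $F$ is a functor applied to an existing factorization; reflection requires that \emph{every} endomorphism $h$ of $F(M)$ arises as $F(h')$ for some $h':M\to M$ (fullness) and that the reflected factorization $h'\,\mathrm{ker}(f)=\mathrm{ker}(f)\,\alpha$ genuinely holds in $\mathcal{A}$, which follows from faithfulness once we know it holds after applying $F$. A second point demanding care is that ``$\mathrm{ker}(g)$ is a section'' must be read as a statement about the subobject, so I must check that the section-splitting morphism in $\mathcal{B}$ descends to $\mathcal{A}$; again fullness supplies a preimage and faithfulness confirms the splitting identity. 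Once these reflection properties are pinned down, the equivalence is a direct translation across the hom-set bijection, and part (2) follows by dualizing: a right exact fully faithful functor preserves and reflects cokernels, retractions, and full coinvariance, so the identical argument applied to images and cokernels yields the dual strongly Rickart equivalence.
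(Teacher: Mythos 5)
Your proposal is correct and takes essentially the same route as the paper's proof: fullness lifts $g=F(f)$ and endomorphisms of $F(M)$, left exactness identifies $F(\mathrm{ker}(f))$ with $\mathrm{ker}(F(f))$, and full faithfulness reflects sections and the factorizations defining full invariance, with part (2) following by duality. One small slip in your commentary (not in the argument itself): in the reflection direction fullness is needed to lift the factorizing endomorphism $\beta$ of the kernel object $F(K)$ back to some $\alpha:K\to K$ (endomorphisms of $F(M)$ need lifting in the \emph{preservation} direction), after which faithfulness gives $hk=k\alpha$ in $\mathcal{A}$, exactly as in the paper.
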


\begin{proof} (1) Assume that $N$ is strongly $M$-Rickart. Let $g:F(M)\to F(N)$ be a morphism in $\mathcal{B}$ with kernel 
$l={\rm ker}(g):L\to F(M)$. Since $F$ is full, we have $g=F(f)$ for some morphism 
$f:M\to N$ in $\mathcal{A}$ with kernel $k={\rm ker}(f):K\to M$. 
Since $N$ is strongly $M$-Rickart, $k$ is a fully invariant section. Clearly, $F(k)$ is a section. 
Since $F$ is left exact, we have $l={\rm ker}(F(f))=F(k)$. Hence $l=F(k)$ is a section. 

Now let $\varphi:F(M)\to F(M)$ be a morphism in $\mathcal{B}$. Since $F$ is full, we have $\varphi=F(h)$ 
for some morphism $h:M\to M$ in $\mathcal{A}$. Since $k$ is a fully invariant kernel, we have 
$hk=k\alpha$ for some morphism $\alpha:K\to K$. It follows that 
\[\varphi l=F(h)F(k)=F(hk)=F(k\alpha)=F(k)F(\alpha)=lF(\alpha).\] 
Hence $l$ is a fully invariant kernel. This shows that $F(N)$ is strongly $F(M)$-Rickart.

Conversely, assume that $F(N)$ is strongly $F(M)$-Rickart. Let $f:M\to N$ be a morphism in $\mathcal{A}$ with kernel $k:K\to M$.
Since $F(N)$ is strongly $F(M)$-Rickart, $l={\rm ker}(F(f))$ is a fully invariant section. 
Since $F$ is left exact, we have $l={\rm ker}(F(f))=F(k)$. Since $F$ is fully faithful, it reflects sections. 
Hence ${\rm ker}(f)=k$ is a section. 

Now let $h:M\to M$ be a morphism in $\mathcal{A}$. Then $F(h):F(M)\to F(M)$ in $\mathcal{B}$. Since 
$l={\rm ker}(F(f))$ is a fully invariant section, we have 
$F(h)l=l\beta$ for some morphism $\beta:L\to L$. Since $F$ is full, we have $\beta=F(\alpha)$ for some morphism $\alpha:K\to K$. 
It folllows that \[F(hk)=F(h)F(k)=F(k)F(\alpha)=F(k\alpha),\] hence $hk=k\alpha$, because $F$ is faithful.
Thus, $k$ is a fully invariant section. This shows that $N$ is strongly $M$-Rickart. 
\end{proof}

For Grothendieck categories we have the following corollary.

\begin{coll} \label{c1:gp} Let $\mathcal{A}$ be a Grothendieck category with a generator $U$, $R={\rm
End}_{\mathcal{A}}(U)$, $S={\rm Hom}_{\mathcal{A}}(U,-):\mathcal{A}\to {\rm Mod}(R)$, and let $M$ and $N$ be objects
of $\mathcal{A}$. Then $N$ is a strongly $M$-Rickart object of $\mathcal{A}$ if and only if $S(N)$ is a strongly $S(M)$-Rickart right
$R$-module.
\end{coll}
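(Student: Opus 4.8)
The plan is to obtain Corollary~\ref{c1:gp} as a direct application of Theorem~\ref{t1:ff}(1). The key observation is that the functor $S={\rm Hom}_{\mathcal{A}}(U,-):\mathcal{A}\to {\rm Mod}(R)$ associated to a generator $U$ of a Grothendieck category is exactly the kind of functor to which the theorem applies, so the entire argument reduces to verifying two properties of $S$: that it is \emph{left exact} and that it is \emph{fully faithful}. Once these are established, Theorem~\ref{t1:ff}(1) applied with $F=S$ immediately yields that $N$ is strongly $M$-Rickart in $\mathcal{A}$ if and only if $S(N)$ is strongly $S(M)$-Rickart in ${\rm Mod}(R)$, which is the assertion.

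First I would recall that for any object $U$ the covariant hom-functor ${\rm Hom}_{\mathcal{A}}(U,-)$ is left exact; this is a standard fact valid in any abelian category and requires no hypothesis on $U$. The substantive step is full faithfulness, and this is precisely where the hypotheses that $\mathcal{A}$ is a Grothendieck category and that $U$ is a generator come into play. The cleanest route is to invoke the Gabriel--Popescu theorem: for a Grothendieck category $\mathcal{A}$ with generator $U$ and $R={\rm End}_{\mathcal{A}}(U)$, the functor $S={\rm Hom}_{\mathcal{A}}(U,-)$ is fully faithful (indeed its essential image is a localizing subcategory of ${\rm Mod}(R)$, and $S$ admits an exact left adjoint). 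This is the heart of the matter; I expect verifying full faithfulness to be the main obstacle, in the sense that it is the only nontrivial input, though here it is simply cited rather than reproved.

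With left exactness and full faithfulness of $S$ in hand, the proof is then a one-line invocation. I would write: since $\mathcal{A}$ is a Grothendieck category with generator $U$, the functor $S={\rm Hom}_{\mathcal{A}}(U,-)$ is left exact and fully faithful by the Gabriel--Popescu theorem. Hence, by Theorem~\ref{t1:ff}(1) applied to $F=S$, the object $N$ is strongly $M$-Rickart in $\mathcal{A}$ if and only if $S(N)$ is strongly $S(M)$-Rickart in ${\rm Mod}(R)$. The only point worth a brief comment is that the target category ${\rm Mod}(R)$ is abelian, so that Theorem~\ref{t1:ff} does apply with $\mathcal{B}={\rm Mod}(R)$; this is immediate.

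I do not anticipate any genuine difficulty beyond correctly citing the Gabriel--Popescu theorem, since all the categorical work has already been discharged in Theorem~\ref{t1:ff}. One stylistic choice is whether to spell out the left-exactness of hom as a separate sentence or to fold both properties into a single appeal to standard theory; I would state both explicitly for clarity, since left exactness is elementary and full faithfulness is the one deep ingredient being imported.
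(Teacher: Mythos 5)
Your proposal is correct and follows essentially the same route as the paper: both cite the Gabriel--Popescu theorem to get full faithfulness of $S={\rm Hom}_{\mathcal{A}}(U,-)$ and then apply Theorem~\ref{t1:ff}(1). The only cosmetic difference is that you justify left exactness by the elementary fact that covariant hom-functors are left exact, while the paper deduces it from $S$ being a right adjoint (of the functor $T$ provided by Gabriel--Popescu); both justifications are immediate and equally valid.
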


\begin{proof} By the Gabriel-Popescu Theorem \cite[Chapter~X, Theorem~4.1]{St}, $S$ is a fully faithful functor which
has a left adjoint $T:{\rm Mod}(R)\to \mathcal{A}$. Since $S$ is a right adjoint, it is left exact. Then the
conclusion follows by Theorem \ref{t1:ff}.  
\end{proof}

By an \emph{adjoint triple} of functors we mean a triple $(L,F,R)$ of covariant functors $F:\mathcal{A}\to \mathcal{B}$
and $L,R:\mathcal{B}\to \mathcal{A}$ such that $(L,F)$ and $(F,R)$ are adjoint pairs of functors. Then $F$ is an exact
functor as a left and right adjoint. Note that $L$ is fully faithful if and only if $R$ is fully faithful
\cite[Lemma~1.3]{DT}. A particular instance of adjoint triple $(L,F,R)$ of functors is when $L=R$, 
in which case $(F,R)$ is called a \emph{Frobenius pair} \cite{CGN}.

Now Theorem \ref{t1:ff} yields the following consequence.  

\begin{coll} \label{c1:tripleff} Let $(L,F,R)$ be an adjoint triple of covariant functors $F:\mathcal{A}\to \mathcal{B}$
and $L,R:\mathcal{B}\to \mathcal{A}$ between abelian categories. 
\begin{enumerate}
\item Let $M$ and $N$ be objects of $\mathcal{A}$, and assume that $F$ is fully faithful. Then $N$ is (dual) strongly 
$M$-Rickart in $\mathcal{A}$ if and only if $F(N)$ is (dual) strongly $F(M)$-Rickart in $\mathcal{B}$.
\item Let $M$ and $N$ be objects of $\mathcal{B}$, and assume that $L$ (or $R$) is fully faithful. Then: 
\begin{enumerate}[(i)] 
\item $N$ is strongly $M$-Rickart in $\mathcal{B}$ if and only if $R(N)$ is strongly $R(M)$-Rickart in $\mathcal{A}$.
\item $N$ is dual strongly $M$-Rickart in $\mathcal{B}$ if and only if $L(N)$ is dual strongly $L(M)$-Rickart in $\mathcal{A}$.
\end{enumerate}
\end{enumerate}
\end{coll}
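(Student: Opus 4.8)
The plan is to reduce everything to Theorem~\ref{t1:ff} by identifying, for each part of the adjoint triple, a fully faithful functor between the relevant categories that has the correct exactness property. The key structural facts I would invoke are that $F$, being both a left and a right adjoint, is exact; that $L$ is fully faithful if and only if $R$ is fully faithful by \cite[Lemma~1.3]{DT}; and that a left adjoint is right exact while a right adjoint is left exact.

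For part (1), the hypothesis is that $F$ itself is fully faithful. Since $F$ is exact (hence both left and right exact), I would apply Theorem~\ref{t1:ff}(1) directly to obtain the strongly $M$-Rickart equivalence, and Theorem~\ref{t1:ff}(2) to obtain the dual strongly $M$-Rickart equivalence. This part is essentially immediate once the exactness of $F$ is recorded.

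For part (2), the hypothesis is that $L$ (equivalently $R$) is fully faithful, and $M,N$ are objects of $\mathcal{B}$. For statement (i), I would use that $R:\mathcal{B}\to\mathcal{A}$ is a right adjoint (of $F$), hence left exact, and is fully faithful by hypothesis (using the equivalence in \cite[Lemma~1.3]{DT}). Applying Theorem~\ref{t1:ff}(1) to the functor $R$ then gives that $N$ is strongly $M$-Rickart in $\mathcal{B}$ if and only if $R(N)$ is strongly $R(M)$-Rickart in $\mathcal{A}$. For statement (ii), I would instead use $L:\mathcal{B}\to\mathcal{A}$, which is a left adjoint (of $F$), hence right exact, and fully faithful; applying Theorem~\ref{t1:ff}(2) to $L$ yields the dual strongly Rickart equivalence.

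The one point that needs care, and the main (though minor) obstacle, is matching the exactness direction to the correct half of Theorem~\ref{t1:ff}: the strongly Rickart statement requires a \emph{left} exact functor (kernels are preserved), so it must be paired with the \emph{right} adjoint $R$, whereas the dual strongly Rickart statement requires a \emph{right} exact functor (cokernels/images are preserved), so it must be paired with the \emph{left} adjoint $L$. Once this pairing is fixed and the equivalence ``$L$ fully faithful $\iff$ $R$ fully faithful'' from \cite[Lemma~1.3]{DT} is cited, every case is a direct appeal to Theorem~\ref{t1:ff}, with no further computation required.
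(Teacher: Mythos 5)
Your proposal is correct and follows exactly the paper's intended argument: the corollary is stated there as an immediate consequence of Theorem~\ref{t1:ff}, using that $F$ is exact as a simultaneous left and right adjoint, that $L$ is fully faithful if and only if $R$ is (by \cite[Lemma~1.3]{DT}), and pairing the left exact right adjoint $R$ with the strongly Rickart case and the right exact left adjoint $L$ with the dual case. Nothing is missing.
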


Let $\varphi:R\to S$ be a ring homomorphism. Following \cite[Chapter~IX, p.105]{St}, consider the following covariant
functors: extension of scalars $\varphi^*:{\rm Mod}(R)\to {\rm Mod}(S)$ given on objects by $\varphi^*(M)=M\otimes_RS$,
restriction of scalars $\varphi_*:{\rm Mod}(S)\to {\rm Mod}(R)$ given on objects by $\varphi_*(N)=N$, and
$\varphi^!:{\rm Mod}(R)\to {\rm Mod}(S)$ given on objects by $\varphi^!(M)={\rm Hom}_R(S,M)$. Then
$(\varphi^*,\varphi_*,\varphi^!)$ is an adjoint triple of functors. 

\begin{coll} Let $\varphi:R\to S$ be a ring epimorphism, and let $M$ and $N$ be right $S$-modules. Then $N$ is a (dual)
strongly $M$-Rickart right $S$-module if and only if $N$ is a (dual) strongly $M$-Rickart right $R$-module.
\end{coll}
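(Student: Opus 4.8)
The plan is to recognize this corollary as a direct application of the preceding machinery on adjoint triples, specialized to the functors associated to a ring homomorphism. Since $(\varphi^*,\varphi_*,\varphi^!)$ is an adjoint triple of covariant functors between $\mathrm{Mod}(R)$ and $\mathrm{Mod}(S)$, Corollary~\ref{c1:tripleff} applies provided the middle functor $\varphi_*$ is fully faithful. The key observation is that for a \emph{ring epimorphism} $\varphi:R\to S$, the restriction of scalars functor $\varphi_*:\mathrm{Mod}(S)\to \mathrm{Mod}(R)$ is fully faithful. This is a standard and well-known characterization of ring epimorphisms (in the category of rings): $\varphi$ is an epimorphism if and only if the natural map $S\otimes_R S\to S$ is an isomorphism, which is equivalent to $\varphi_*$ being a full embedding. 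I would cite this fact (e.g.\ from Stenstr\"om's book) rather than reprove it.

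First I would set up the correspondence: the functors $L=\varphi^*$, $F=\varphi_*$, $R=\varphi^!$ form the adjoint triple, playing the roles of the functors in Corollary~\ref{c1:tripleff}. Here $M$ and $N$ are right $S$-modules, so they live in the category $\mathcal{B}=\mathrm{Mod}(S)$, and we transfer them via the middle functor $F=\varphi_*$ into $\mathcal{A}=\mathrm{Mod}(R)$. Second, I would invoke that $\varphi$ being a ring epimorphism makes $\varphi_*$ fully faithful. Third, since $\varphi_*$ is the identity on underlying sets (it only forgets/restricts the action), it is clear that $\varphi_*(M)=M$ and $\varphi_*(N)=N$ as abelian groups with the induced $R$-action; thus the statement ``$F(N)$ is (dual) strongly $F(M)$-Rickart'' becomes precisely ``$N$ is a (dual) strongly $M$-Rickart right $R$-module.''

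With these three ingredients in place, the conclusion is immediate from part~(1) of Corollary~\ref{c1:tripleff}: $N$ is (dual) strongly $M$-Rickart in $\mathrm{Mod}(S)$ if and only if $\varphi_*(N)$ is (dual) strongly $\varphi_*(M)$-Rickart in $\mathrm{Mod}(R)$, which is exactly the desired equivalence. The main (and really the only nontrivial) obstacle is justifying that $\varphi_*$ is fully faithful; everything else is bookkeeping about which functor sits in which slot. I expect the full proof to be just two or three sentences: observe $(\varphi^*,\varphi_*,\varphi^!)$ is an adjoint triple, note that a ring epimorphism makes $\varphi_*$ fully faithful, and apply Corollary~\ref{c1:tripleff}(1). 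One subtlety worth a remark is that Corollary~\ref{c1:tripleff}(1) requires $F$ to be fully faithful but does \emph{not} require any exactness hypothesis beyond what the adjoint triple already guarantees (namely that $F$, being both a left and a right adjoint, is automatically exact), so no additional flatness or injectivity condition on $S$ over $R$ is needed.
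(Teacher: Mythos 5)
Your proposal is correct and follows exactly the paper's own proof: the paper likewise notes that a ring epimorphism makes the restriction-of-scalars functor $\varphi_*$ fully faithful (citing Stenstr\"om, Chapter~XI, Proposition~1.2) and then applies Corollary~\ref{c1:tripleff} to the adjoint triple $(\varphi^*,\varphi_*,\varphi^!)$. The only blemish is notational: you swap the labels $\mathcal{A}$ and $\mathcal{B}$ relative to the corollary's statement (there $M,N$ live in $\mathcal{A}$, the domain of the middle functor $F$), but since you correctly place $M,N$ in the domain of $\varphi_*$ and invoke part~(1), the argument is unaffected.
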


\begin{proof} Since $\varphi:R\to S$ is a ring epimorphism, the restriction of scalars functor $\varphi_*:{\rm
Mod}(S)\to {\rm Mod}(R)$ is fully faithful \cite[Chapter~XI, Proposition~1.2]{St}. Then use Corollary \ref{c1:tripleff}
for the adjoint triple of functors $(\varphi^*,\varphi_*,\varphi^!)$.
\end{proof}

Now let us recall some terminology and properties of graded rings and modules, following \cite{Nasta-04}. 
In what follows $G$ will denote a group with identity element $e$. A ring $R$ is called \emph{(strongly) $G$-graded} 
if there is a family $(R_{\sigma})_{\sigma\in G}$ of additive subgroups of $R$ such that 
$R=\bigoplus_{\sigma\in G}R_{\sigma}$ and $R_{\sigma}R_{\tau}\subseteq R_{\sigma\tau}$ 
($R_{\sigma}R_{\tau}=R_{\sigma\tau}$) for every $\sigma,\tau\in G$. 

For a $G$-graded ring $R=\bigoplus_{\sigma\in G}R_{\sigma}$, denote by ${\rm gr}(R)$ the category which
has as objects the $G$-graded unital right $R$-modules and as morphisms the morphisms of $G$-graded unital right
$R$-modules, defined as follows. For a $G$-graded ring $R=\bigoplus_{\sigma\in G}R_{\sigma}$, a \emph{$G$-graded} (for
short, \emph{graded}) right $R$-module is a right $R$-module $M$ such that $M=\bigoplus_{\sigma\in G}M_{\sigma}$, where
every $M_{\sigma}$ is an additive subgroup of $M$ and for every $\lambda,\sigma\in G$, we have $M_{\sigma}\cdot
R_{\lambda}\subseteq M_{\sigma\lambda}$. For two graded right $R$-modules $M$ and $N$, the morphisms between them
are defined as follows: $$\hom{\mathrm{gr}(R)}{M}{N}=\{f\in \hom{R}{M}{N}\mid f(M_{\sigma})\subseteq N_{\sigma} \textrm{
for every } \sigma\in G\}.$$ Note that $\mathrm{gr}(R)$ is a Grothendieck category (e.g., see \cite[p.~21]{Nasta-04}).

Recall that for a graded right $R$-module $M$ and $\sigma\in G$, the \emph{$\sigma$-suspension} $M(\sigma)$ of $M$ 
is the graded right $R$-module which is equal to $M$ as a right $R$-module and the gradation is given by 
$M({\sigma})_{\tau}=M_{\tau\sigma}$ for every $\tau\in G$. 
Then the \emph{$\sigma$-suspension functor} $T_{\sigma}:\mathrm{gr}(R)\to \mathrm{gr}(R)$ is defined by 
$T_{\sigma}(M)=M({\sigma})$. 

For $\sigma\in G$, define the functor $(-)_{\sigma}:\mathrm{gr}(R)\to {\rm Mod}(R_e)$, 
which associates to every graded right $R$-module $M=\bigoplus_{\tau\in G}M_{\tau}$ the right $R_e$-module $M_{\sigma}$.  
The \emph{induced functor} ${\rm Ind}:{\rm Mod}(R_e)\to {\rm gr}(R)$ is defined as follows. 
For a right $R_e$-module $N$, ${\rm Ind}(N)$ is the graded right $R$-module $M=N\otimes_{R_e}R$, 
where the gradation of $M=\bigoplus_{\sigma\in G}M_{\sigma}$ is given by 
$M_{\sigma}=N_{\sigma}\otimes_{R_e}R$ for every $\sigma\in G$. 
The \emph{coinduced functor} ${\rm Coind}:{\rm Mod}(R_e)\to {\rm gr}(R)$ is defined as follows.
For a right $R_e$-module $N$, ${\rm Coind}(N)$ is the graded right $R$-module 
$M^*=\bigoplus_{\sigma\in G}M'_{\sigma}$, where 
\[M'_{\sigma}=\{f\in {\rm Hom}_{R_e}(R,N)\mid f(R_{\sigma'})=0 \textrm{ for every } \sigma'\neq \sigma^{-1}\}.\]
Then $(T_{\sigma^{-1}}\circ {\rm Ind},(-)_{\sigma},T_{\sigma^{-1}}\circ {\rm Coind})$ is an adjoint triple of functors 
\cite[Theorem~2.5.5]{Nasta-04}. In particular, $({\rm Ind},(-)_{e},{\rm Coind})$ is an adjoint triple of functors.

\begin{coll} \label{c1:triplegr} Let $R$ be a $G$-graded ring, and let $M$ and $N$ be right $R_e$-modules. Then: 
\begin{enumerate} 
\item $N$ is a strongly $M$-Rickart right $R_e$-module if and only if 
${\rm Coind}(N)$ is a strongly ${\rm Coind}(M)$-Rickart graded right $R$-module.
\item $N$ is a dual strongly $M$-Rickart right $R_e$-module if and only if 
${\rm Ind}(N)$ is a dual strongly ${\rm Ind}(M)$-Rickart graded right $R$-module.
\end{enumerate}
\end{coll}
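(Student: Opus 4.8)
The plan is to obtain both statements as direct instances of Corollary~\ref{c1:tripleff}(2), applied to the adjoint triple $({\rm Ind},(-)_e,{\rm Coind})$ recalled just above. In the notation of that corollary, the middle functor is $F=(-)_e\colon {\rm gr}(R)\to {\rm Mod}(R_e)$, so that $\mathcal{A}={\rm gr}(R)$ and $\mathcal{B}={\rm Mod}(R_e)$, while the flanking functors are $L={\rm Ind}$ and $R={\rm Coind}$, both from ${\rm Mod}(R_e)$ to ${\rm gr}(R)$. Since $M$ and $N$ are right $R_e$-modules, they are objects of $\mathcal{B}$, which is exactly the setting of part~(2). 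Part~(2)(i), phrased through the right adjoint ${\rm Coind}$, then gives statement~(1), and part~(2)(ii), phrased through the left adjoint ${\rm Ind}$, gives statement~(2).

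The only hypothesis of Corollary~\ref{c1:tripleff}(2) that needs checking is that $L={\rm Ind}$ (equivalently, by \cite[Lemma~1.3]{DT}, that $R={\rm Coind}$) is fully faithful. I would verify this by recalling that a left adjoint is fully faithful precisely when the unit of its adjunction is a natural isomorphism, and then computing the unit of $({\rm Ind},(-)_e)$: for every right $R_e$-module $N$ one has the natural identification $({\rm Ind}(N))_e\cong N\otimes_{R_e}R_e\cong N$, obtained by unwinding the gradation on ${\rm Ind}(N)=N\otimes_{R_e}R$. Thus the unit $N\to ({\rm Ind}(N))_e$ is an isomorphism and ${\rm Ind}$ is fully faithful. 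This is also part of the standard structure theory in \cite{Nasta-04}.

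Once full faithfulness is in place, both equivalences follow immediately from Corollary~\ref{c1:tripleff}(2), with no further work. I do not anticipate a genuine obstacle: the substance lies entirely in the correct bookkeeping between our concrete triple $({\rm Ind},(-)_e,{\rm Coind})$ and the abstract triple $(L,F,R)$, together with the short verification that ${\rm Ind}$ is fully faithful; the underlying graded computation $({\rm Ind}(N))_e\cong N$ is routine.
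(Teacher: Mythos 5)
Your proposal is correct and follows essentially the same route as the paper: both proofs apply Corollary~\ref{c1:tripleff}(2) to the adjoint triple $({\rm Ind},(-)_e,{\rm Coind})$, with $\mathcal{B}={\rm Mod}(R_e)$ and $\mathcal{A}={\rm gr}(R)$, after checking that ${\rm Ind}$ (equivalently ${\rm Coind}$) is fully faithful. The only cosmetic difference is that the paper quotes \cite[Theorem~2.5.5]{Nasta-04} for the natural isomorphisms $(-)_{\sigma}\circ T_{\sigma^{-1}}\circ{\rm Ind}\cong 1_{{\rm Mod}(R_e)}$ and $(-)_{\sigma}\circ T_{\sigma^{-1}}\circ{\rm Coind}\cong 1_{{\rm Mod}(R_e)}$, whereas you verify full faithfulness directly by computing that the unit $N\to({\rm Ind}(N))_e\cong N\otimes_{R_e}R_e\cong N$ is an isomorphism---which is exactly the $\sigma=e$ instance of that cited result.
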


\begin{proof} By \cite[Theorem~2.5.5]{Nasta-04}, for $\sigma\in G$, 
we have $(-)_{\sigma}\circ T_{\sigma^{-1}}\circ {\rm Ind}\cong 1_{{\rm Mod}(R_e)}$ 
and $(-)_{\sigma}\circ T_{\sigma^{-1}}\circ {\rm Coind}\cong 1_{{\rm Mod}(R_e)}$, that is,  
the functors $T_{\sigma^{-1}}\circ {\rm Ind}$ and $T_{\sigma^{-1}}\circ {\rm Coind}$ are fully faithful. 
In particular, the functors ${\rm Ind}$ and ${\rm Coind}$ are fully faithful.
Now use Corollary \ref{c1:tripleff}. 
\end{proof}

Let $\mathcal{A}$ be an abelian category and let $\mathcal{C}$ be a full subcategory of $\mathcal{A}$. Then
$\mathcal{C}$ is called a \emph{reflective} (\emph{coreflective}) subcategory of $\mathcal{A}$ if the inclusion functor
$i:\mathcal{C}\to \mathcal{A}$ has a left (right) adjoint. In this case $i$ is fully faithful. 

\begin{coll} \label{c1:rc} Let $\mathcal{A}$ be an abelian category, $\mathcal{C}$ an abelian full subcategory of
$\mathcal{A}$ and $i:\mathcal{C}\to \mathcal{A}$ the inclusion functor. Let $M$ and $N$ be objects of $\mathcal{C}$. 
\begin{enumerate}
\item Assume that $\mathcal{C}$ is a reflective subcategory of $\mathcal{A}$. 
Then $N$ is strongly $M$-Rickart in $\mathcal{C}$ if and only if $i(N)$ is strongly $i(M)$-Rickart in $\mathcal{A}$.
\item Assume that $\mathcal{C}$ is a coreflective subcategory of $\mathcal{A}$. 
Then $N$ is (dual) strongly $M$-Rickart in $\mathcal{C}$ if and only if $i(N)$ is (dual) strongly $i(M)$-Rickart in $\mathcal{A}$.
\end{enumerate}
\end{coll}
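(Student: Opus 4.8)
The plan is to reduce everything to Theorem~\ref{t1:ff}, exactly as was done in the preceding corollaries on adjoint triples. The key observation is that a reflective or coreflective subcategory comes with an inclusion functor $i:\mathcal{C}\to\mathcal{A}$ that is fully faithful (as stated in the paragraph introducing the definition), and that the existence of a one-sided adjoint forces the appropriate exactness on $i$. So the whole proof amounts to checking which exactness hypothesis of Theorem~\ref{t1:ff} is available in each case, and then quoting that theorem.

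For part (1), I would argue as follows. If $\mathcal{C}$ is reflective, then $i$ has a left adjoint, so $i$ is a right adjoint and hence left exact. Since $i$ is also fully faithful, Theorem~\ref{t1:ff}(1) applies directly and gives that $N$ is strongly $M$-Rickart in $\mathcal{C}$ if and only if $i(N)$ is strongly $i(M)$-Rickart in $\mathcal{A}$. For part (2), if $\mathcal{C}$ is coreflective, then $i$ has a right adjoint, so $i$ is a left adjoint and hence right exact; Theorem~\ref{t1:ff}(2) then yields the dual strongly Rickart equivalence. The only subtlety is that part (2) is stated for \emph{both} the strongly Rickart and the dual strongly Rickart property (the ``(dual)'' in the statement), whereas a single application of Theorem~\ref{t1:ff} to the right-exact functor $i$ delivers only the dual statement. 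To recover the non-dual statement I would note that a coreflective inclusion functor, although it is automatically right exact, is in fact also left exact here because $\mathcal{C}$ is assumed to be an \emph{abelian} full subcategory and $i$ preserves kernels computed in $\mathcal{C}$; more safely, I would simply apply Theorem~\ref{t1:ff}(1) using that $i$, being a fully faithful inclusion of an abelian subcategory, is exact when it is coreflective, so both halves follow.

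Concretely, the steps in order are: first, record that in either case $i$ is fully faithful; second, identify the adjoint that exists and deduce the exactness of $i$ (right adjoint $\Rightarrow$ left exact in the reflective case; left adjoint $\Rightarrow$ right exact in the coreflective case); third, invoke the relevant part of Theorem~\ref{t1:ff}. For the coreflective case I would verify that $i$ is genuinely exact, so that part~(1) of Theorem~\ref{t1:ff} can also be applied to obtain the ordinary strongly Rickart equivalence alongside the dual one.

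The part I expect to be the main obstacle is justifying the exactness needed to cover \emph{both} properties in the coreflective case. A right adjoint is only guaranteed left exact and a left adjoint only right exact, so a single adjointness hypothesis naturally produces only one of the two Rickart properties. The resolution is to use that $\mathcal{C}$ is an abelian full subcategory of $\mathcal{A}$, so the inclusion $i$ is exact whenever it is a one-sided adjoint into an abelian category; this is what legitimizes writing ``(dual)'' in part~(2). I would make this exactness explicit before quoting Theorem~\ref{t1:ff}, since everything else is a routine citation of earlier results.
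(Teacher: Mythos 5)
Your proposal is correct and follows essentially the same route as the paper, whose entire proof reads: for (1) note that $i$ is left exact fully faithful and use Theorem~\ref{t1:ff}, and for (2) note that $i$ is exact fully faithful and use Theorem~\ref{t1:ff}. The subtlety you isolate---that the coreflective case needs full exactness of $i$ (not merely right exactness) to yield both the strong and the dual strong Rickart equivalences---is resolved by the paper exactly as you resolve it, namely by asserting that the inclusion of a coreflective abelian full subcategory is exact, with no more detailed justification than yours.
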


\begin{proof} (1) Note that $i$ is left exact fully faithful and use Theorem \ref{t1:ff}.

(2) Note that $i$ is exact fully faithful and use Theorem \ref{t1:ff}.
\end{proof}

Following \cite[Section~2.2]{DNR}, let $C$ be a coalgebra over a field $k$, and let ${}^C\mathcal{M}$ be the
(Grothendieck) category of left $C$-comodules. Left $C$-comodules may be and will be identified with rational right
$C^*$-modules, where $C^*={\rm Hom}_{k}(C,k)$. Consider the inclusion functor $i:{}^C\mathcal{M}\to {\rm Mod}(C^*)$,
and the functor ${\rm Rat}:{\rm Mod}(C^*)\to {}^C\mathcal{M}$ which associates to every right $C^*$-module its rational
$C^*$-submodule. Then $i$ is a fully faithful exact functor and ${\rm Rat}$ is a right adjoint to $i$. Hence
${}^C\mathcal{M}$ is a coreflective subcategory of ${\rm Mod}(C^*)$. 

\begin{coll} \label{c1:com1} Let $C$ be a coalgebra over a field, and let $M$ and $N$ be left $C$-comodules. 
Then $N$ is (dual) strongly $M$-Rickart if and only if $N$ is (dual) strongly $M$-Rickart as a right $C^*$-module. 
\end{coll}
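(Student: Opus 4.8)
The plan is to reduce Corollary \ref{c1:com1} directly to the coreflective-subcategory result already established in Corollary \ref{c1:rc}(2). The key observation is that the setup described immediately before the statement fits the hypotheses of that corollary verbatim: the category ${}^C\mathcal{M}$ of left $C$-comodules is an abelian full subcategory of ${\rm Mod}(C^*)$, and the inclusion functor $i:{}^C\mathcal{M}\to {\rm Mod}(C^*)$ admits a right adjoint, namely the rationalization functor ${\rm Rat}$. This exhibits ${}^C\mathcal{M}$ as a coreflective subcategory of ${\rm Mod}(C^*)$, which is precisely the hypothesis of Corollary \ref{c1:rc}(2).

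First I would recall that ${}^C\mathcal{M}$ is a Grothendieck category (hence abelian) and that, under the identification of left $C$-comodules with rational right $C^*$-modules, it sits as a full subcategory of ${\rm Mod}(C^*)$ via the inclusion $i$. Next I would note that $i$ is fully faithful and exact, and that ${\rm Rat}$ is right adjoint to $i$, so that ${}^C\mathcal{M}$ is coreflective in ${\rm Mod}(C^*)$. With these facts in hand, applying Corollary \ref{c1:rc}(2) to the objects $M$ and $N$ of ${}^C\mathcal{M}$ yields that $N$ is (dual) strongly $M$-Rickart in ${}^C\mathcal{M}$ if and only if $i(N)$ is (dual) strongly $i(M)$-Rickart in ${\rm Mod}(C^*)$. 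Since $i(M)=M$ and $i(N)=N$ as right $C^*$-modules, this is exactly the desired equivalence.

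The proof is therefore almost entirely a matter of verifying that the comodule framework satisfies the abstract hypotheses, all of which are recorded in the paragraph preceding the statement. I do not expect any genuine obstacle here, since the real content — the transfer of the (dual) strong relative Rickart property across a fully faithful exact functor — was already carried out in Theorem \ref{t1:ff} and specialized in Corollary \ref{c1:rc}. The only point deserving a brief remark is that the \emph{dual} strongly Rickart case is available in part (2) of Corollary \ref{c1:rc} (and not merely the strongly Rickart case), which is exactly why coreflectivity, rather than mere reflectivity, is the relevant hypothesis: the inclusion $i$ is not only left exact but exact, so Theorem \ref{t1:ff} applies in both directions.

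\begin{proof} As recalled above, the category ${}^C\mathcal{M}$ of left $C$-comodules is an abelian full subcategory of ${\rm Mod}(C^*)$, the inclusion functor $i:{}^C\mathcal{M}\to {\rm Mod}(C^*)$ is exact and fully faithful, and ${\rm Rat}$ is a right adjoint of $i$. Hence ${}^C\mathcal{M}$ is a coreflective subcategory of ${\rm Mod}(C^*)$. The conclusion now follows by Corollary \ref{c1:rc}~(2), since $i(M)=M$ and $i(N)=N$ as right $C^*$-modules.
\end{proof}
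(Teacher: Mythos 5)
Your proposal is correct and is exactly the argument the paper intends: the paragraph preceding the statement establishes that the inclusion $i:{}^C\mathcal{M}\to {\rm Mod}(C^*)$ is fully faithful and exact with right adjoint ${\rm Rat}$, so that ${}^C\mathcal{M}$ is coreflective in ${\rm Mod}(C^*)$, and the corollary then follows immediately from Corollary~\ref{c1:rc}~(2). The paper leaves this deduction implicit, but your verification of the hypotheses and the remark that coreflectivity (exactness of $i$, not mere left exactness) is what makes the dual case available match the paper's reasoning precisely.
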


In order to discuss the transfer of the (dual) strong relative Rickart property to endomorphism rings, 
we establish first some general results involving adjoint functors. We need the following notions.

\begin{defn}[{\cite[Definition~2.10]{CO}}] \rm Let $M$ and $N$ be objects of an abelian category $\mathcal{A}$. Then: 
\begin{enumerate}
\item $N$ is called \emph{$M$-cyclic} if there exists an epimorphism $M\to N$.
\item $M$ is called \emph{$N$-cocyclic} if there exists a monomorphism $M\to N$.
\end{enumerate}
\end{defn}

Let $(L,R)$ be an adjoint pair of covariant functors $L:\mathcal{A}\to \mathcal{B}$ and $R:\mathcal{B}\to \mathcal{A}$
between abelian categories. Let $\varepsilon:LR\to 1_{\mathcal{B}}$ and $\eta:1_{\mathcal{A}}\to RL$ be the counit and
the unit of adjunction respectively. Recall that an object $B\in \mathcal{B}$ is called \emph{$R$-static} if
$\varepsilon_B$ is an isomorphism, while an object $A\in \mathcal{A}$ is called \emph{$R$-adstatic} if $\eta_A$ is an
isomorphism \cite{CGW}. Denote by ${\rm Stat}(R)$ the full subcategory of $\mathcal{B}$ consisting of $R$-static
objects, and by ${\rm Adst}(R)$ the full subcategory of $\mathcal{A}$ consisting of $R$-adstatic objects. 

\begin{theo} \label{t1:equiv} Let $(L,R)$ be an adjoint pair of covariant functors $L:\mathcal{A}\to \mathcal{B}$ and
$R:\mathcal{B}\to \mathcal{A}$ between abelian categories. 
\begin{enumerate}
\item Let $M$ and $N$ be objects of $\mathcal{B}$ such that $M,N\in {\rm Stat}(R)$. Then the following are equivalent:
\begin{enumerate}[(i)] 
\item $N$ is strongly $M$-Rickart in $\mathcal{B}$.
\item $R(N)$ is strongly $R(M)$-Rickart in $\mathcal{A}$ and for every morphism $f:M\to N$, ${\rm Ker}(f)$ is $M$-cyclic.
\item $R(N)$ is strongly $R(M)$-Rickart in $\mathcal{A}$ and for every morphism $f:M\to N$, ${\rm Ker}(f)\in {\rm Stat}(R)$.
\end{enumerate}
\item Let $M$ and $N$ be objects of $\mathcal{A}$ such that $M,N\in {\rm Adst}(R)$. Then the following are equivalent:
\begin{enumerate}[(i)] 
\item $N$ is dual strongly $M$-Rickart in $\mathcal{A}$.
\item $L(N)$ is dual strongly $L(M)$-Rickart in $\mathcal{B}$ and for every morphism $f:M\to N$, ${\rm Coker}(f)$ is
$N$-cocyclic.
\item $L(N)$ is dual strongly $L(M)$-Rickart in $\mathcal{B}$ and for every morphism $f:M\to N$, ${\rm Coker}(f)\in {\rm
Adst}(R)$.
\end{enumerate}
\end{enumerate}
\end{theo}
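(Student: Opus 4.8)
The plan is to prove part (1), with part (2) following by the duality principle. The strategy is to establish the implications $(i)\Rightarrow(iii)\Rightarrow(ii)\Rightarrow(i)$, exploiting throughout the fact that on the static subcategory $\mathrm{Stat}(R)$ the counit $\varepsilon$ is an isomorphism, so that $L$ restricts to an equivalence between $\mathrm{Adst}(R)$ and $\mathrm{Stat}(R)$ with inverse $R$. The key technical input is the standard adjunction fact that for $M,N\in\mathrm{Stat}(R)$ the map $f\mapsto R(f)$ gives a bijection $\hom{\mathcal{B}}{M}{N}\cong\hom{\mathcal{A}}{R(M)}{R(N)}$, with inverse sending $g:R(M)\to R(N)$ to $\varepsilon_N\,L(g)\,\varepsilon_M^{-1}$.

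First I would address $(i)\Rightarrow(iii)$. Assume $N$ is strongly $M$-Rickart and take a morphism $f:M\to N$. Then $\mathrm{ker}(f)$ is a fully invariant section, so $M\cong\mathrm{Ker}(f)\oplus C$ for some complement $C$. Applying $R$, which is left exact as a right adjoint, gives $R(\mathrm{ker}(f))=\mathrm{ker}(R(f))$ and preserves direct summands, so $\mathrm{Ker}(R(f))$ is a direct summand of $R(M)$; I must also transport full invariance, using that the endomorphisms of $R(M)$ correspond to those of $M$ under the adjunction bijection on static objects. This shows $R(N)$ is strongly $R(M)$-Rickart. For the static condition on the kernel, since $\mathrm{Ker}(f)$ is a direct summand of the static object $M$, and $\mathrm{Stat}(R)$ is closed under direct summands (because $\varepsilon$ is natural and a retract of an isomorphism in the relevant sense is an isomorphism), I conclude $\mathrm{Ker}(f)\in\mathrm{Stat}(R)$.

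Next, $(iii)\Rightarrow(ii)$ should be routine: if $\mathrm{Ker}(f)\in\mathrm{Stat}(R)$ then $\varepsilon_{\mathrm{Ker}(f)}$ is an isomorphism, which exhibits $\mathrm{Ker}(f)\cong LR(\mathrm{Ker}(f))$ as a quotient of $L$ applied to the static object $M$; more directly, $\mathrm{Ker}(f)$ being static means it lies in the image of $L$ up to isomorphism, and combining with the strong $R(M)$-Rickart hypothesis yields an epimorphism $M\to\mathrm{Ker}(f)$, so $\mathrm{Ker}(f)$ is $M$-cyclic. The main work is the reverse implication $(ii)\Rightarrow(i)$: given that $R(N)$ is strongly $R(M)$-Rickart and each kernel is $M$-cyclic, I take $f:M\to N$ and must show $\mathrm{ker}(f)$ is a fully invariant section in $\mathcal{B}$. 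Here I would write $f=\varepsilon_N\,L(R(f))\,\varepsilon_M^{-1}$ using staticity, apply $L$ (which is exact by hypothesis) to the kernel sequence of $R(f)$ to obtain that $L(\mathrm{ker}(R(f)))=\mathrm{ker}(L R(f))$ is a fully invariant section in $\mathcal{B}$, and then use the isomorphisms $\varepsilon_M,\varepsilon_N$ to identify $\mathrm{ker}(f)$ with this, up to the $M$-cyclicity needed to guarantee that $L(\mathrm{Ker}(R(f)))$ actually computes $\mathrm{Ker}(f)$ rather than merely a subobject.

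The hard part will be this last step, specifically reconciling $\mathrm{Ker}(f)$ with $L(\mathrm{Ker}(R(f)))$. Exactness of $L$ gives $L(\mathrm{Ker}(R(f)))\cong\mathrm{Ker}(LR(f))$, and the isomorphisms $\varepsilon_M,\varepsilon_N$ identify $\mathrm{Ker}(LR(f))$ with $\mathrm{Ker}(f)$; the subtlety is that transporting full invariance back from $\mathcal{A}$ to $\mathcal{B}$ requires the full faithfulness of the comparison only on the relevant hom-sets, and the $M$-cyclicity hypothesis in $(ii)$ is exactly what guarantees the kernel is recovered on the nose rather than after passing through a non-isomorphism. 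I expect the role of the equivalent conditions to be precisely to bridge this gap: $(iii)$ phrases it via staticity of the kernel, while $(ii)$ phrases the same obstruction via $M$-cyclicity, and the bulk of the proof is checking these two formulations coincide and each suffices to push full invariance across $L$.
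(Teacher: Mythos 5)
Your overall architecture --- the cycle $(i)\Rightarrow(iii)\Rightarrow(ii)\Rightarrow(i)$, the hom-set bijection $f\mapsto R(f)$ on ${\rm Stat}(R)$, closure of ${\rm Stat}(R)$ under direct summands for $(i)\Rightarrow(iii)$ --- is reasonable, and those first two legs can be made rigorous. But the decisive implication $(ii)\Rightarrow(i)$ has a genuine gap: you ``apply $L$ (which is exact by hypothesis)'', and there is no such hypothesis. The theorem assumes only that $(L,R)$ is an adjoint pair; $L$ is a left adjoint, hence right exact, and in general does \emph{not} preserve kernels, so the identification $L({\rm Ker}(R(f)))\cong {\rm Ker}(LR(f))$ on which your argument rests is unavailable. (Exactness of $L$ is mentioned in the paper's abstract only for the statement about strongly regular objects, which involves the dual property as well; Theorem~\ref{t1:equiv} itself does not assume it, and its proof never uses it.) You yourself flag the reconciliation of ${\rm Ker}(f)$ with $L({\rm Ker}(R(f)))$ as ``the hard part'' and say you \emph{expect} $M$-cyclicity to bridge it, but no mechanism is given --- so the proof is incomplete precisely at its crux, the passage back from $\mathcal{A}$ to $\mathcal{B}$.

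The idea you are missing is that one never needs $L$ to compute a kernel; one only needs $L$ to preserve \emph{sections}, which every additive functor does. Concretely: with $k={\rm ker}(f):K\to M$, left exactness of $R$ gives $R(k)={\rm ker}(R(f))$, which by hypothesis is a fully invariant section in $\mathcal{A}$; hence $LR(k)$ is a section, and it is fully invariant in $LR(M)$ because every endomorphism $\psi$ of $LR(M)$ equals $LR(h)$ for $h=\varepsilon_M\psi\varepsilon_M^{-1}$ (naturality of $\varepsilon$ plus $\varepsilon_M$ being an isomorphism), so $\psi LR(k)=LR(k)L(\alpha)$ where $R(h)R(k)=R(k)\alpha$. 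Then the two counit naturality squares $k\varepsilon_K=\varepsilon_M LR(k)$ and $\varepsilon_K LR(p)=p\,\varepsilon_M$, where $p:M\to K$ is the epimorphism supplied by $M$-cyclicity, show that $\varepsilon_K$ is respectively a section and an epimorphism, hence an isomorphism --- this is exactly how $M$-cyclicity is converted into staticity of the kernel --- and finally $k=\varepsilon_M LR(k)\varepsilon_K^{-1}$ is a fully invariant section in $\mathcal{B}$. This is the paper's route (its cycle is $(i)\Rightarrow(ii)\Rightarrow(iii)\Rightarrow(i)$, with the $\varepsilon_K$ argument carrying the step from $(ii)$ to $(iii)$); without some version of it, your proposal cannot close the loop.
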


\begin{proof}  We use the idea of the proof of \cite[Theorem~2.11]{CO}.

Let $\varepsilon:LR\to 1_{\mathcal{B}}$ and $\eta:1_{\mathcal{A}}\to RL$ be the counit and the unit of
adjunction respectively.

(1) (i)$\Rightarrow$(ii) Assume that $N$ is strongly $M$-Rickart in $\mathcal{B}$. Let $g:R(M)\to R(N)$ be a morphism in
$\mathcal{A}$. By naturality we have the following commutative diagram in $\mathcal{A}$: 
\[\SelectTips{cm}{}
\xymatrix{
R(M) \ar[d]_{\eta_{R(M)}} \ar[r]^g & R(N) \ar[d]^{\eta_{R(N)}} \\ 
RLR(M) \ar[r]_{RL(g)} & RLR(N)  
} 
\]
Since $R(\varepsilon_M)\eta_{R(M)}=1_{R(M)}$ and $\varepsilon_M$ is an isomorphism, it follows that $\eta_{R(M)}$ is an
isomorphism. Similarly, $\eta_{R(N)}$ is an isomorphism. Then we have $RL(g)=g$. Viewing $L(g):M\to N$, 
$l={\rm ker}(L(g)):L\to M$ is a fully invariant section, because $N$ is strongly $M$-Rickart. But $R$ is left exact, hence 
$k={\rm ker}(g)={\rm ker}(RL(g))=R({\rm ker}(L(g)))=R(l)$ is a section. We claim that $k$ is a fully invariant section. 
To this end, let $\varphi:R(M)\to R(M)$ be a morphism. Viewing $L(\varphi):M\to M$, we have $L(\varphi)k=k\alpha$ 
for some morphism $\alpha:L\to L$, because $N$ is strongly $M$-Rickart. 
Then $RL(\varphi)R(l)=R(l)R(\alpha)$. But $\eta_{R(M)}$ is an isomorphism, 
hence we may identify $RL(\varphi)$ with $\varphi$. Then $\varphi k=\varphi R(l)=R(l)R(\alpha)=kR(\alpha)$.
Thus, $k$ is a fully invariant section. Then $R(N)$ is strongly $R(M)$-Rickart in $\mathcal{A}$. 
Finally, for every morphism $f:M\to N$ in $\mathcal{B}$, ${\rm Ker}(f)$ is a direct summand of $M$, hence ${\rm Ker}(f)$ is $M$-cyclic.

(ii)$\Rightarrow$(iii) Assume that $R(N)$ is strongly $R(M)$-Rickart in $\mathcal{A}$ and for every morphism $f:M\to
N$, ${\rm Ker}(f)$ is $M$-cyclic. Let $f:M\to N$ be a morphism in $\mathcal{B}$ with kernel $k={\rm
ker}(f):K\to M$. Since $R(N)$ is strongly $R(M)$-Rickart, ${\rm ker}(R(f))$ is a fully invariant section. Since $R$ is left
exact, it follows that $R(k)$ is a fully invariant section, and so $LR(k)$ is a section. 
We claim that $LR(k)$ is a fully invariant section. To this end, let $\varphi:LR(M)\to LR(M)$. 
Viewing $\varphi:M\to M$, we have $R(\varphi)R(k)=R(k)\alpha$ for some morphism $\alpha:R(K)\to R(K)$, 
because $R(N)$ is strongly $R(M)$-Rickart. Then $LR(\varphi)LR(k)=LR(k)L(\alpha)$. 
We may identify $LR(\varphi)$ with $\varphi$. Then $\varphi LR(k)=LR(k)L(\alpha)$. Thus $LR(k)$ is a 
fully invariant section. 

Since $K$ is $M$-cyclic, there is an epimorphism $p:M\to K$. 
By naturality we have the following two commutative diagrams in $\mathcal{B}$: 
\[\SelectTips{cm}{}
\xymatrix{
LR(K) \ar[d]_{\varepsilon_K} \ar[r]^{LR(k)} & LR(M) \ar[d]^{\varepsilon_M} \\ 
K \ar[r]_k & M  
} 
\hspace{2cm}
\xymatrix{
LR(M) \ar[d]_{\varepsilon_M} \ar[r]^{LR(p)} & LR(K) \ar[d]^{\varepsilon_K} \\ 
M \ar[r]_p & K  
} 
\]
Then $k\varepsilon_K=\varepsilon_M LR(k)$ is a section, hence $\varepsilon_K$ is a section. Also, $\varepsilon_K
LR(p)=p\varepsilon_M$ is an epimorphism, hence $\varepsilon_K$ is an epimorphism. Then $\varepsilon_K$ is
an isomorphism, which implies that ${\rm Ker}(f)\in {\rm Stat}(R)$. 

(iii)$\Rightarrow$(i) Assume that $R(N)$ is strongly $R(M)$-Rickart in $\mathcal{A}$ and for every morphism $f:M\to
N$, ${\rm Ker}(f)\in {\rm Stat}(R)$. Let $f:M\to N$ be a morphism in $\mathcal{B}$ with kernel $k={\rm
ker}(f):K\to M$. Then $\varepsilon_K$ and $\varepsilon_M$ are isomorphisms, and $LR(k)$ is a fully invariant section as above. Now
the above left hand side diagram implies that $k$ is a fully invariant section. Hence $N$ is strongly $M$-Rickart in $\mathcal{B}$.
\end{proof}

We need some further terminology in order to state the contravariant version of Theorem \ref{t1:equiv}, 
which will be useful when discussing the transfer of (dual) strongly self-Rickart property between a module 
and its endomorphism ring.

Let $(L,R)$ be a right adjoint pair of contravariant functors $L:\mathcal{A}\to
\mathcal{B}$ and $R:\mathcal{B}\to \mathcal{A}$ between abelian categories. Let $\varepsilon:1_{\mathcal{B}}\to LR$ and
$\eta:1_{\mathcal{A}}\to RL$ be the counit and the unit of adjunction respectively. Recall that an object $B\in
\mathcal{B}$ is called \emph{$R$-reflexive} if $\varepsilon_B$ is an isomorphism, while an object $A\in \mathcal{A}$
is called \emph{$L$-reflexive} if $\eta_A$ is an isomorphism \cite{Castano}. Denote by ${\rm Refl}(R)$ the full
subcategory of $\mathcal{B}$ consisting of $R$-reflexive objects, and by ${\rm Refl}(L)$ the full subcategory of
$\mathcal{A}$ consisting of $L$-reflexive objects. 

\begin{theo} \label{t1:dual} Let $(L,R)$ be a pair of contravariant functors $L:\mathcal{A}\to \mathcal{B}$ and
$R:\mathcal{B}\to \mathcal{A}$ between abelian categories. 
\begin{enumerate}
\item Assume that $(L,R)$ is left adjoint. Let $M$ and $N$ be objects of $\mathcal{B}$ such that $M,N\in {\rm Refl}(R)$.
Then the following are equivalent:
\begin{enumerate}[(i)] 
\item $N$ is strongly $M$-Rickart in $\mathcal{B}$.
\item $R(M)$ is dual strongly $R(N)$-Rickart in $\mathcal{A}$ and for every morphism $f:M\to N$, ${\rm Ker}(f)$ is $M$-cyclic.
\item $R(M)$ is dual strongly $R(N)$-Rickart in $\mathcal{A}$ and for every morphism $f:M\to N$, ${\rm Ker}(f)\in {\rm Refl}(R)$.
\end{enumerate}
\item Assume that $(L,R)$ is right adjoint. Let $M$ and $N$ be objects of $\mathcal{A}$ such that $M,N\in {\rm
Refl}(L)$. Then the following are equivalent:
\begin{enumerate}[(i)] 
\item $N$ is dual strongly $M$-Rickart in $\mathcal{A}$.
\item $L(M)$ is strongly $L(N)$-Rickart in $\mathcal{B}$ and for every morphism $f:M\to N$, ${\rm Coker}(f)$ is $N$-cocyclic.
\item $L(M)$ is strongly $L(N)$-Rickart in $\mathcal{B}$ and for every morphism $f:M\to N$, ${\rm Coker}(f)\in {\rm Refl}(L)$.
\end{enumerate}
\end{enumerate}
\end{theo}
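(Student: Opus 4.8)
The strategy is to reduce the contravariant statement to the covariant Theorem~\ref{t1:equiv} by passing to the opposite category, exactly as the phrase ``we use the idea of the proof'' suggests for the earlier results. A pair of contravariant functors $L:\mathcal{A}\to\mathcal{B}$ and $R:\mathcal{B}\to\mathcal{A}$ becomes a pair of \emph{covariant} functors $L:\mathcal{A}^{\mathrm{op}}\to\mathcal{B}$ and $R:\mathcal{B}\to\mathcal{A}^{\mathrm{op}}$ (or with the opposite taken on the other side). A left adjoint pair of contravariant functors in the sense defined just above — where both $\varepsilon:1_{\mathcal{B}}\to LR$ and $\eta:1_{\mathcal{A}}\to RL$ point ``outward'' — translates into an honest covariant adjoint pair once one of the two categories is replaced by its opposite, and the $R$-reflexive objects become precisely the static (or adstatic) objects for the resulting covariant adjunction. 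The plan is therefore to set up this dictionary carefully and then invoke Theorem~\ref{t1:equiv} verbatim.

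First I would treat part (1). With $(L,R)$ left adjoint contravariant and units $\varepsilon:1_{\mathcal{B}}\to LR$, $\eta:1_{\mathcal{A}}\to RL$, I view $R$ as a covariant functor $R:\mathcal{B}\to\mathcal{A}^{\mathrm{op}}$ and $L$ as a covariant functor $\mathcal{A}^{\mathrm{op}}\to\mathcal{B}$; the ``left adjoint'' condition is exactly the statement that $(L,R)$ is then a covariant adjoint pair with $L$ left adjoint to $R$ in the sense of Theorem~\ref{t1:equiv}, whose counit and unit are induced by $\varepsilon$ and $\eta$. Under the duality $\mathcal{A}\leftrightarrow\mathcal{A}^{\mathrm{op}}$, the notion ``$R(M)$ is \emph{dual} strongly $R(N)$-Rickart in $\mathcal{A}$'' corresponds to ``$R(N)$ is strongly $R(M)$-Rickart in $\mathcal{A}^{\mathrm{op}}$'', and ``$\mathrm{Ker}(f)\in\mathrm{Refl}(R)$'' corresponds to ``$\mathrm{Ker}(f)$ is static for the covariant adjunction''. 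The $M$-cyclic condition on $\mathrm{Ker}(f)$ is self-dual in the relevant sense and transports directly. Feeding these translations into the three equivalent conditions of Theorem~\ref{t1:equiv}(1) yields precisely conditions (i)--(iii) of the present part (1).

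The main obstacle, and the step deserving genuine care, is the bookkeeping of which category gets opposed and the verification that the contravariant ``left adjoint'' convention (with both transformations going $1\to LR$ and $1\to RL$) really matches the covariant adjunction hypothesis of Theorem~\ref{t1:equiv}; a sign or side error here collapses the whole reduction. I would spell out the natural isomorphism $\hom{\mathcal{B}}{B}{L(A)}\cong\hom{\mathcal{A}}{A}{R(B)}$ and check that, upon replacing $\mathcal{A}$ by $\mathcal{A}^{\mathrm{op}}$, it becomes $\hom{\mathcal{B}}{L(A)}{B}'\cong\ldots$ of the required covariant shape, so that $R$-reflexive in $\mathcal{B}$ is literally $R$-static for the covariant pair and $L$-reflexive in $\mathcal{A}$ becomes adstatic. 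Once that identification is pinned down, part (2) follows either by the same opposite-category device applied to the right adjoint pair, or simply by the duality principle in abelian categories applied to part (1), so I would prove (1) in full and dispatch (2) with the standard one-line appeal to duality that the paper uses throughout.
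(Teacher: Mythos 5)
Your overall strategy is the correct one, and it is in fact all the paper itself relies on: Theorem~\ref{t1:dual} is stated there \emph{without any proof}, presented as the contravariant version of Theorem~\ref{t1:equiv}, so the intended argument is precisely the opposite-category reduction you describe. Your dictionary is also right: for part (1) one opposes $\mathcal{A}$, reads ``$R(M)$ is dual strongly $R(N)$-Rickart in $\mathcal{A}$'' as ``$R(N)$ is strongly $R(M)$-Rickart in $\mathcal{A}^{\rm op}$'', identifies ${\rm Refl}(R)$ with the static objects of the resulting covariant adjunction, and transports $M$-cyclicity verbatim (it lives in $\mathcal{B}$, which is not opposed); part (2) then follows by the duality principle, as you say.

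However, your write-up commits exactly the ``side error'' you warned would collapse the reduction. In the paper's convention, the outward transformations $\varepsilon:1_{\mathcal{B}}\to LR$, $\eta:1_{\mathcal{A}}\to RL$ and the isomorphism $\Hom_{\mathcal{B}}(B,L(A))\cong \Hom_{\mathcal{A}}(A,R(B))$ define a \emph{right} adjoint pair of contravariant functors --- the hypothesis of part (2). Part (1) assumes a \emph{left} adjoint pair, whose transformations point inward, $\varepsilon:LR\to 1_{\mathcal{B}}$ and $\eta:RL\to 1_{\mathcal{A}}$, and whose adjunction isomorphism is $\Hom_{\mathcal{B}}(L(A),B)\cong \Hom_{\mathcal{A}}(R(B),A)$; opposing $\mathcal{A}$ turns this literally into $\Hom_{\mathcal{B}}(L(A),B)\cong \Hom_{\mathcal{A}^{\rm op}}(A,R(B))$, i.e.\ the covariant adjunction $L\dashv R$ with ${\rm Refl}(R)={\rm Stat}(R)$ that Theorem~\ref{t1:equiv}(1) requires. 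If instead you run the reduction from the isomorphism you actually wrote down, opposing $\mathcal{A}$ yields $\Hom_{\mathcal{A}^{\rm op}}(R(B),A)\cong\Hom_{\mathcal{B}}(B,L(A))$, which is the adjunction $R\dashv L$ ($R$ left adjoint), and Theorem~\ref{t1:equiv} then makes assertions about objects of $\mathcal{A}$, not of $\mathcal{B}$ --- you would land on a statement of the shape of part (2) rather than part (1). The proof goes through exactly as you planned once the conventions are paired correctly: inward transformations, oppose $\mathcal{A}$, apply Theorem~\ref{t1:equiv}(1) for part (1); outward transformations, oppose $\mathcal{B}$, apply Theorem~\ref{t1:equiv}(2) for part (2).
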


Next we discuss the transfer of (dual) strong relative Rickart property to endomorphism rings of (graded) modules. 

Let us recall some terminology and notation in module categories. We need the concepts of locally
split monomorphism and locally split epimorphism due to Azumaya. Recall that a monomorphism $f:A\to B$ of right
$R$-modules is called \emph{locally split} if for every $a\in A$, there exists an $R$-homomorphism $h:B\to A$ such that
$h(f(a))=a$, while an epimorphism $g:B\to C$ of right $R$-modules is called \emph{locally split} if for every $c\in C$,
there exists an $R$-homomorphism $h:C\to B$ such that $g(h(c))=c$ \cite[p.~132]{Azumaya}. 

Recall that a right $R$-module $M$ is called \emph{quasi-retractable} if for every family $(f_i)_{i\in I}$ with each
$f_i\in {\rm End}_R(M)$ and $\bigcap_{i\in I} {\rm Ker}(f_i)\neq 0$, ${\rm Hom}_R(M,\bigcap_{i\in I} {\rm Ker}(f_i))\neq
0$ \cite[Definition~2.3]{RR09}. Dually, a right $R$-module is called \emph{quasi-coretractable} if for every family
$(f_i)_{i\in I}$ with each $f_i\in {\rm End}_R(M)$ and $\sum_{i\in I} {\rm Im}(f_i)\neq M$, ${\rm Hom}_R(M/\sum_{i\in I}
{\rm Im}(f_i),M)\neq 0$ \cite[Definition~3.2]{KST}. 

We also need the following notions.

\begin{defn} \cite[Definition~4.1]{CO} \rm A right $R$-module $M$ is called:
\begin{enumerate}
\item \emph{$k$-quasi-retractable} if ${\rm Hom}_R(M,{\rm Ker}(f))\neq 0$ for every $f\in {\rm End}_R(M)$
with ${\rm Ker}(f)\neq 0$.
\item \emph{$c$-quasi-coretractable} if ${\rm Hom}_R({\rm Coker}(f),M)\neq 0$ for every $f\in {\rm End}_R(M)$ with ${\rm
Coker}(f)\neq 0$.
\end{enumerate}
\end{defn}

The following theorem gives several equivalent conditions relating the (dual) strong self-Rickart properties of a module and
of its endomorphism ring. 

\begin{theo} \label{t1:end} Let $M$ be a right $R$-module, and let $S={\rm End}_R(M)$. 
\begin{enumerate}
\item The following are equivalent:
\begin{enumerate}[(i)] 
\item $M$ is a strongly self-Rickart right $R$-module. 
\item $S$ is a strongly self-Rickart right $S$-module and for every $f\in S$, ${\rm Ker}(f)$ is $M$-cyclic.
\item $S$ is a strongly self-Rickart right $S$-module and for every $f\in S$, ${\rm Ker}(f)\in {\rm Stat}({\rm Hom}_R(M,-))$.
\item $S$ is a strongly self-Rickart right $S$-module and for every $f\in S$, ${\rm ker}(f)$ is a locally split monomorphism.
\item $S$ is a strongly self-Rickart right $S$-module and $M$ is $k$-quasi-retractable.
\end{enumerate}
\item The following are equivalent:
\begin{enumerate}[(i)] 
\item $M$ is a dual strongly self-Rickart right $R$-module.
\item $S$ is a strongly self-Rickart left $S$-module and for every $f\in S$, ${\rm Coker}(f)$ is $M$-cocyclic.
\item $S$ is a strongly self-Rickart left $S$-module and for every $f\in S$, ${\rm Coker}(f)\in {\rm Refl}({\rm Hom}_R(-,M))$.
\item $S$ is a strongly self-Rickart left $S$-module and for every $f\in S$, ${\rm coker}(f)$ is a locally split epimorphism.
\item $S$ is a strongly self-Rickart left $S$-module and $M$ is $c$-quasi-coretractable.
\end{enumerate}
\end{enumerate}
\end{theo}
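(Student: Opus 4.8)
The plan is to deduce part~(1) from the functorial criterion in Theorem~\ref{t1:equiv}(1) applied to a concrete adjunction, and then to identify the two remaining module-theoretic clauses by hand; part~(2) will follow symmetrically from the contravariant Theorem~\ref{t1:dual}. First I would take $\mathcal{A}={\rm Mod}(S)$, $\mathcal{B}={\rm Mod}(R)$, and the tensor-hom adjoint pair $(L,R_0)$ with $L=-\otimes_S M$ and $R_0={\rm Hom}_R(M,-)$, whose counit at $X$ is the evaluation $\varepsilon_X:{\rm Hom}_R(M,X)\otimes_S M\to X$. Since $R_0(M)={\rm Hom}_R(M,M)=S$ and $\varepsilon_M:S\otimes_S M\to M$ is the canonical isomorphism, we have $M\in{\rm Stat}(R_0)$. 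Taking both objects in Theorem~\ref{t1:equiv}(1) to be $M$ yields at once the equivalence of (i), (ii) and (iii), because $R_0(M)=S$ as a right $S$-module and ${\rm Stat}(R_0)={\rm Stat}({\rm Hom}_R(M,-))$.

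Next, since each of (ii)--(v) carries the standing clause ``$S$ is strongly self-Rickart'', I would work under that hypothesis. By Proposition~\ref{p1:strring}, $S$ is then self-Rickart and abelian. Fix $f\in S$ and put $K={\rm Ker}(f)$. The kernel of left multiplication $\lambda_f:S_S\to S_S$ is $\{g\in S\mid fg=0\}$, a direct summand $eS$ for some idempotent $e$, which is central as $S$ is abelian; hence $fe=ef=0$, so $eM\subseteq K$, ${\rm Im}(f)\subseteq(1-e)M$, and ${\rm Hom}_R(M,K)=eS$, giving ${\rm tr}_M(K)=eM$, a direct summand of $M$. Because ${\rm Im}(\varepsilon_K)={\rm tr}_M(K)=eM$, and because $eM$ is the summand attached to $e$ so that $\varepsilon_{eM}$ is the canonical isomorphism $eS\otimes_S M\cong eM$ (whence $eM\in{\rm Stat}(R_0)$), the clauses ``$K$ is $M$-cyclic'', ``$K\in{\rm Stat}(R_0)$'' and ``$k={\rm ker}(f)$ is a locally split monomorphism'' are each equivalent to the single equality $K=eM$ (i.e. $K$ is $M$-generated): a surjection $M\to K$ forces $K\subseteq{\rm Im}(g)\subseteq eM$, and conversely $e\colon M\twoheadrightarrow eM$ together with local splitting of $eM\hookrightarrow M$ is immediate. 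This settles (ii)$\Leftrightarrow$(iii)$\Leftrightarrow$(iv).

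It remains to compare these with the $k$-quasi-retractability clause (v). One direction is clear: if $K=eM$ for every $f$, then $K\neq 0$ forces $eM\neq 0$, whence ${\rm Hom}_R(M,K)=eS\neq 0$, so $M$ is $k$-quasi-retractable. The converse is the crux, and the step I expect to be the main obstacle, since it must bootstrap the mere non-vanishing in (v) up to the equality $K=eM$. Here I would decompose $K=eM\oplus K'$ with $K'=K\cap(1-e)M$ and exploit the identity ${\rm Ker}(f+e)=K'$: as ${\rm Im}(f)\subseteq(1-e)M$ and ${\rm Im}(e)=eM$ are independent, $(f+e)(m)=0$ forces $f(m)=0$ and $e(m)=0$. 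Any homomorphism $M\to K'$ has image in $eM\cap(1-e)M=0$, so ${\rm Hom}_R(M,K')=0$; if $K'\neq 0$, applying $k$-quasi-retractability to the endomorphism $f+e$ contradicts this. Hence $K'=0$ and $K=eM$, which gives (v)$\Rightarrow$(ii) and completes~(1).

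Finally, part~(2) is not the categorical dual of~(1) within a single category, so it cannot be obtained purely by the duality principle; instead it follows along identical lines from Theorem~\ref{t1:dual}(2), applied to the right adjoint pair of contravariant functors built from ${\rm Hom}_R(-,M)$ (for which $L(M)={\rm Hom}_R(M,M)=S$ as a \emph{left} $S$-module and $M\in{\rm Refl}$). Throughout that argument images replace kernels, cokernels and $N$-cocyclicity replace kernels and $M$-cyclicity, ${\rm Refl}({\rm Hom}_R(-,M))$ replaces ${\rm Stat}({\rm Hom}_R(M,-))$, locally split epimorphisms replace locally split monomorphisms, and the central-idempotent computation is run symmetrically to reduce $c$-quasi-coretractability to the dual equality.
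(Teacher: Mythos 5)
Your proof is correct, but for clauses (iv) and (v) it takes a genuinely different route from the paper. The paper's entire proof is a citation: the equivalences (i)$\Leftrightarrow$(ii)$\Leftrightarrow$(iii) come from Theorems \ref{t1:equiv} and \ref{t1:dual} applied to the tensor--hom adjunctions exactly as you do, but the equivalence with the locally-split and $k$-quasi-retractable clauses is imported wholesale from the non-strong analogue \cite[Theorem~4.3]{CO}, with Corollary \ref{c1:wduo} (equivalently Proposition \ref{p1:strring}) used to pass between ``self-Rickart'' and ``strongly self-Rickart'' on both sides --- the key point being that $M_R$ and $S_S$ have the same endomorphism ring $S$, so ``$S$ abelian'' is the common extra condition that upgrades both statements simultaneously. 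You instead prove (ii)$\Leftrightarrow$(iii)$\Leftrightarrow$(iv)$\Leftrightarrow$(v) from scratch: strong self-Rickartness of $S$ makes $S$ abelian, so ${\rm Ker}(\lambda_f)=eS$ with $e$ a \emph{central} idempotent, each of the conditions in (ii)--(iv) collapses to the single equality ${\rm Ker}(f)=eM$ via the trace computation ${\rm Hom}_R(M,{\rm Ker}(f))=eS$, and your $f+e$ argument (using ${\rm Ker}(f+e)=K\cap(1-e)M$ together with ${\rm Hom}_R(M,K\cap(1-e)M)=0$) correctly bootstraps the mere nonvanishing in (v) up to that equality. Your route buys self-containedness --- no appeal to the companion paper --- and the centrality of $e$ makes the hands-on computation cleaner than it would be in the general Rickart setting; the paper's route buys brevity and uniformity, since \cite[Theorem~4.3]{CO} already contains all five clauses and only the ``strong'' adjective needs transferring. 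Your remark that part (2) is not an internal categorical dual and must instead go through the contravariant Theorem \ref{t1:dual} agrees with the paper's treatment; the dual idempotent computation you sketch (with target equality ${\rm Im}(f)=(1-e)M$, where $Se={\rm Ker}(\rho_f)$) does go through, though you leave its details unwritten, which is no worse than the paper's own one-line proof.
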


\begin{proof} This follows by Theorems \ref{t1:equiv} and \ref{t1:dual}, Corollary \ref{c1:wduo} and \cite[Theorem~4.3]{CO}. 
\end{proof}

For a $G$-graded ring $R$ and two graded right $R$-modules $M$ and $N$ 
we may consider the graded abelian group $\HOM{R}{M}{N}$, whose
$\sigma$-th homogeneous component is $$\HOM{R}{M}{N}_{\sigma} = \{f\in \hom{R}{M}{N} \,\mid\, f(M_{\lambda})\subseteq
M_{\sigma\lambda} \mbox{ for all }\sigma\in G \}.$$ For $M=N$, $S={\rm END}_R(M)=\HOM{R}{M}{M}$ is a $G$-graded ring
and $M$ is a graded $(S,R)$-bimodule, that is, $S_{\tau}\cdot M_{\sigma}\cdot R_{\lambda}\subseteq
M_{\tau\sigma\lambda}$ for every $\tau,\sigma,\lambda\in G$. 

For a graded right $S$-module $N$, the right $R$-module $N\otimes_SM$ may be graded by
$$(N\otimes_SM)_{\tau}=\left \{\sum_{\sigma\lambda=\tau}n_{\sigma}\otimes m_{\lambda} \mid n_{\sigma}\in
N_{\sigma}, m_{\lambda}\in M_{\lambda}\right \}.$$ Then the covariant functor $-\otimes_SM:{\rm gr}(S)\to {\rm gr}(R)$
is a left adjoint to the covariant functor $\HOM{R}{M}{-}:{\rm gr}(R)\to {\rm gr}(S)$.

For a graded right $R$-module $M$ with $S={\rm END}_R(M)$, the functors $\HOM{R}{-}{M}:{\rm gr}(R)\to {\rm gr}(S^{\rm
op})$ and $\HOM{S}{-}{M}:{\rm gr}(S^{\rm op})\to {\rm gr}(R)$ form a right adjoint pair of contravariant functors.

\begin{coll} \label{c1:endgr} Let $M$ be a graded right $R$-module, and let $S={\rm END}_R(M)$. 
\begin{enumerate} \item The following are equivalent:
\begin{enumerate}[(i)]
\item $M$ is a strongly self-Rickart graded right $R$-module.
\item $S$ is a strongly self-Rickart graded right $S$-module and for every $f\in S$, ${\rm Ker}(f)$ is $M$-cyclic.
\item $S$ is a strongly self-Rickart graded right $S$-module and for every $f\in S$, ${\rm Ker}(f)\in {\rm Stat}(\HOM{R}{M}{-})$.
\end{enumerate}
\item The following are equivalent:
\begin{enumerate}[(i)]
\item $M$ is a dual strongly self-Rickart graded right $R$-module.
\item $S$ is a strongly self-Rickart graded left $S$-module and for every $f\in S$, ${\rm Coker}(f)$ is $M$-cocyclic.
\item $S$ is a strongly self-Rickart graded left $S$-module and for every $f\in S$, ${\rm Coker}(f)\in {\rm
Refl}(\HOM{R}{-}{M})$.
\end{enumerate}
\end{enumerate}
\end{coll}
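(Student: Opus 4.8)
The plan is to obtain both parts as direct specializations of the general adjunction results Theorem~\ref{t1:equiv} and Theorem~\ref{t1:dual}, applied to the two graded adjoint pairs recalled just above the statement, with $M$ playing the role of both objects.

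For part~(1) I would take the covariant adjoint pair $(L,\mathcal{R})$ with $L=-\otimes_S M:{\rm gr}(S)\to {\rm gr}(R)$ and $\mathcal{R}=\HOM{R}{M}{-}:{\rm gr}(R)\to {\rm gr}(S)$, so that $\mathcal{B}={\rm gr}(R)$ and $\mathcal{A}={\rm gr}(S)$ in the notation of Theorem~\ref{t1:equiv}. The first step is to record that $\mathcal{R}(M)=\HOM{R}{M}{M}={\rm END}_R(M)=S$ as a graded right $S$-module, so that the condition ``$\mathcal{R}(N)$ strongly $\mathcal{R}(M)$-Rickart'' becomes ``$S$ strongly self-Rickart graded right $S$-module'', while ${\rm Stat}(\mathcal{R})={\rm Stat}(\HOM{R}{M}{-})$. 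The second step, which is the only genuine verification, is to check that $M\in {\rm Stat}(\mathcal{R})$, i.e.\ that the counit $\varepsilon_M:M\otimes_S S\to M$ is an isomorphism; this is the canonical graded isomorphism. Theorem~\ref{t1:equiv}(1), applied to the object $M$ in the role of both $M$ and $N$, then yields exactly the equivalence of (i), (ii) and (iii).

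For part~(2) I would use the contravariant right adjoint pair $(L',R')$ with $L'=\HOM{R}{-}{M}:{\rm gr}(R)\to {\rm gr}(S^{\rm op})$ and $R'=\HOM{S}{-}{M}:{\rm gr}(S^{\rm op})\to {\rm gr}(R)$, so that $\mathcal{A}={\rm gr}(R)$ and $\mathcal{B}={\rm gr}(S^{\rm op})$ in Theorem~\ref{t1:dual}(2). Here $L'(M)=\HOM{R}{M}{M}=S$, viewed as an object of ${\rm gr}(S^{\rm op})$, i.e.\ as a graded left $S$-module, so that ``$L'(M)$ strongly $L'(N)$-Rickart'' reads ``$S$ strongly self-Rickart graded left $S$-module''. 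The verification needed here is that $M\in {\rm Refl}(L')$, i.e.\ that the unit $\eta_M:M\to R'L'(M)=\HOM{S}{S}{M}$ is an isomorphism; this is the canonical graded evaluation isomorphism, and ${\rm Refl}(L')={\rm Refl}(\HOM{R}{-}{M})$. Theorem~\ref{t1:dual}(2), applied to $M$ in both roles, then gives the equivalence of (i), (ii) and (iii).

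The main obstacle, such as it is, lies entirely in these two staticity/reflexivity checks: one must confirm that the standard counit $M\otimes_S S\to M$ and unit $M\to \HOM{S}{S}{M}$ are isomorphisms of \emph{graded} modules, and that they are precisely the counit and unit of the respective graded adjunctions. Both are the graded analogues of familiar module isomorphisms; the only care required is compatibility with the $G$-gradings, which follows from the descriptions of the gradings on $N\otimes_S M$ and on the graded $\mathrm{HOM}$ functors recalled above, together with the identification ${\rm END}_R(M)=S$ as graded rings. Once these are in hand, the statement is a direct translation of the conclusions of Theorem~\ref{t1:equiv} and Theorem~\ref{t1:dual} into the present notation.
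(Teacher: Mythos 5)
Your proposal is correct and takes essentially the same approach as the paper: the same two graded adjoint pairs $(-\otimes_SM,\HOM{R}{M}{-})$ and $(\HOM{R}{-}{M},\HOM{S}{-}{M})$, the identification of the image of $M$ under the Hom functor with $S$, the check that $M$ is static (resp.\ reflexive) via the canonical isomorphism, and then Theorem~\ref{t1:equiv}(1), resp.\ Theorem~\ref{t1:dual}(2), applied with $M=N$. The only slip is notational: in part~(1) the counit at $M$ is $S\otimes_SM\to M$ rather than $M\otimes_SS\to M$ (since $M$ is a \emph{left} $S$-module here), which does not affect the argument.
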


\begin{proof} (1) Consider the adjoint pair of covariant functors $(T,H)$, where $$T=-\otimes_SM:{\rm gr}(S)\to {\rm
gr}(R),$$ $$H=\HOM{R}{M}{-}:{\rm gr}(R)\to {\rm gr}(S).$$ Let $\varepsilon:TH\to 1_{{\rm gr}(R)}$ be the counit of
adjunction. We have $TH(M)\cong M$, hence $\varepsilon_M$ is an isomorphism, and so $M\in {\rm Stat}(H)$. Now take $M=N$
in Theorem \ref{t1:equiv} (1). 

(2) Consider the right adjoint pair of contravariant functors $(H,H')$, where $$H=\HOM{R}{-}{M}:{\rm gr}(R)\to
{\rm gr}(S^{\rm op}),$$ $$H'=\HOM{S}{-}{M}:{\rm gr}(S^{\rm op})\to {\rm gr}(R).$$ Let $\eta:1_{{\rm gr}(S^{\rm
op})}\to H'H$ be the unit of adjunction. We have $H'H(M)\cong M$, hence $\eta_M$ is an isomorphism, and so $M\in {\rm
Refl}(H)$. Now take $M=N$ in Theorem \ref{t1:dual} (2). 
\end{proof}

\end{document}